\documentclass[12pt]{amsart}
\usepackage[a4paper,twoside,left=2cm,right=2cm,top=3.1cm,bottom=2.3cm]{geometry}

\usepackage{amsmath,amssymb,amscd,amsthm}
\usepackage{latexsym}
\usepackage{graphicx}
\usepackage[english]{babel}
\usepackage[latin1]{inputenc}       
\usepackage{textcomp}
\usepackage{times}
\textwidth 170mm
\usepackage{colortbl}

\newtheorem{theorem}{Theorem}[section]

\newtheorem{proposition}[theorem]{Proposition}

\newtheorem{remark}[theorem]{Remark}

\def\irr#1{{\rm Irr}(#1)}
\def\irrr#1#2 {\irr {#1 \mid #2}}

\newcommand{\R}{\mathbb R}
\newcommand{\N}{\mathbb N}

\renewcommand{\d}{\mbox{\rm d}}

\begin{document}

\title[]{Log-concavity of solutions of parabolic equations\\ related to the Ornstein-Uhlenbeck operator\\ and applications}

\author[Andrea Colesanti, Lei Qin, Paolo Salani ]{Andrea Colesanti, Lei Qin, Paolo Salani}
\address{Dipartimento di Matematica e Informatica ``U. Dini", Universit\`a degli Studi di Firenze}
\email{andrea.colesanti@unifi.it}
\address{Institute of Mathematics,
Hunan University, Changsha, 410082, China}
\email{qlhnumath@hnu.edu.cn}
\address{Dipartimento di Matematica e Informatica ``U. Dini", Universit\`a degli Studi di Firenze}
\email{paolo.salani@unifi.it}

\begin{abstract}
In this paper, we investigate the log-concavity of the kernel for the parabolic Ornstein-Uhlenbeck operator in a bounded, convex domain. Consequently, we get the preservation of the log-concavity of the initial datum by the related flow.
As an application, we give another proof of a Brunn-Minkowski type inequality for the first eigenvalue of the Ornstein-Uhlenbeck operator and of the log-concavity of the related first eigenfunction (both results have been proved in \cite{CFLS24}, by different methods).
\end{abstract}

\keywords{Log-concavity; Mehler kernel; Ornstein-Uhlenbeck operator}

\maketitle

\baselineskip18pt

\parskip3pt

\section{Introduction}

Concavity properties of solutions of the elliptic and parabolic type equations and related inequalities have been largely investigated.
In particular, in the parabolic case, the log-concavity of the heat kernel of a convex domain plays a crucial role.
Indeed, exploiting it, in the classical paper \cite{BL76} Brascamp and Lieb proved that the heat flow preserves the log-concavity of the initial datum, and further consequences are a Brunn-Minkowski type inequality for the first non-trivial eigenvalue of the Dirichlet Laplacian and the log-concavity of the corresponding first (positive) eigenfunction of a convex domain (see also \cite{DD1979}).
Very recently, similar results, i.e., a Brunn-Minkowski inequality for the first non-trivial Dirichlet eigenvalue and the the log-concavity of the first eigenfunction, have been established in \cite{CFLS24} for the so called Ornstein-Uhlenbeck operator (that is, roughly speaking, the Laplace operator in the Gauss space), using a method based on the log-concave envelope of the involved functions. A natural question is if the approach used by Brascamp and Lieb in \cite{BL76}, based on the log-concavity of the kernel and on the Pr\'ekopa-Leindler inequality, can be applied in this case as well. In this article we see that the answer is affirmative.

In particular, we consider the following boundary value problem:
\begin{equation}\label{H-PDE2}
\left\{
\begin{aligned}
&\Delta u -(x,\nabla u)=u_t, \quad\mbox{in} \ \Omega \times \{t>0\},  \\
&u=0,  \quad\mbox{on} \ \ \partial\Omega\times \{ t>0 \}, \\
&u(x,0)=u_0(x),\quad\mbox{on} \ \ \Omega \times \{ t=0 \},
\end{aligned}
\right.
\end{equation}
where $\Omega$ is an open bounded subset of $\R^n$, with Lipschitz boundary, and  $u_0\in L^2(\Omega,\gamma)$.
We prove the following.

\begin{theorem}\label{T1}
Let $\Omega$ be an open bounded convex set in $\R^n$, and let $u_0$ be a log-concave function in $\Omega$. Then the solution $u$ of problem \eqref{H-PDE2} is log-concave with respect to $x$, for all $t>0$.
\end{theorem}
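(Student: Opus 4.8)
The plan is to follow the Brascamp--Lieb strategy: represent $u(\cdot,t)$ through the transition kernel of the killed Ornstein--Uhlenbeck semigroup, reduce the statement to the joint log-concavity of that kernel, and establish the latter by a time-discretised path-integral argument. Writing $Lu = \Delta u - (x,\nabla u)$ and letting $L_\Omega$ be its realization under homogeneous Dirichlet conditions, the solution of \eqref{H-PDE2} is $u(\cdot,t) = e^{tL_\Omega}u_0$, equivalently
\begin{equation*}
u(x,t) = \int_\Omega p_\Omega(t,x,y)\, u_0(y)\, \d y,
\end{equation*}
where $p_\Omega(t,\cdot,\cdot)$ is the transition density (with respect to Lebesgue measure) of the Ornstein--Uhlenbeck diffusion killed upon leaving $\Omega$. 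The whole argument rests on the following key fact, which I would isolate as a lemma: for every $t>0$, the map $(x,y)\mapsto p_\Omega(t,x,y)$ is log-concave on $\Omega\times\Omega$.

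Granting this, the propagation of log-concavity follows cleanly from the Pr\'ekopa--Leindler inequality. Fix $t>0$, $\lambda\in[0,1]$ and $x_0,x_1\in\Omega$, and set $x_\lambda=(1-\lambda)x_0+\lambda x_1$. On $\Omega$ define
\begin{equation*}
f(y)=p_\Omega(t,x_0,y)u_0(y),\qquad g(y)=p_\Omega(t,x_1,y)u_0(y),\qquad h(y)=p_\Omega(t,x_\lambda,y)u_0(y).
\end{equation*}
For $y_0,y_1\in\Omega$, writing $y_\lambda=(1-\lambda)y_0+\lambda y_1\in\Omega$, the joint log-concavity of $p_\Omega$ (applied along the segment from $(x_0,y_0)$ to $(x_1,y_1)$) together with the log-concavity of $u_0$ gives $h(y_\lambda)\ge f(y_0)^{1-\lambda}g(y_1)^{\lambda}$. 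Pr\'ekopa--Leindler then yields $\int_\Omega h\ge(\int_\Omega f)^{1-\lambda}(\int_\Omega g)^{\lambda}$, that is $u(x_\lambda,t)\ge u(x_0,t)^{1-\lambda}u(x_1,t)^{\lambda}$, which is exactly the log-concavity of $u(\cdot,t)$.

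The core of the proof is therefore the joint log-concavity of $p_\Omega$, and here I would combine two ingredients. First, the free Mehler kernel
\begin{equation*}
p(s,x,y)=\frac{1}{(2\pi(1-e^{-2s}))^{n/2}}\exp\!\Big(-\frac{|y-e^{-s}x|^2}{2(1-e^{-2s})}\Big)
\end{equation*}
is jointly log-concave in $(x,y)$, since its logarithm is, up to an additive constant, a nonpositive multiple of the convex quadratic $(x,y)\mapsto|y-e^{-s}x|^2$. Second, the killed kernel admits the time-slicing (Feynman--Kac) representation
\begin{equation*}
p_\Omega(t,x,y)=\lim_{N\to\infty}\int_{\Omega^{N-1}}\prod_{i=1}^{N}p\!\big(\tfrac{t}{N},z_{i-1},z_i\big)\,\d z_1\cdots\d z_{N-1},\qquad z_0=x,\ z_N=y,
\end{equation*}
in which the Dirichlet condition is encoded by the requirement that all intermediate positions lie in $\Omega$. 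For fixed $N$, the integrand is a product of factors each log-concave in the whole vector $(z_0,\dots,z_N)$, times the indicator $\prod_{i=1}^{N-1}\mathbf{1}_\Omega(z_i)$ of a convex set (this is where convexity of $\Omega$ enters); hence it is log-concave. By Pr\'ekopa's theorem, integrating out $z_1,\dots,z_{N-1}$ preserves log-concavity in $(z_0,z_N)=(x,y)$, so each $N$-fold integral is log-concave, and since log-concavity is stable under pointwise limits the claim follows by letting $N\to\infty$.

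I expect the main obstacle to be the rigorous justification of this last step on a merely Lipschitz convex domain: proving existence, positivity and sufficient regularity of $p_\Omega$, validating the time-slicing representation and its convergence, and verifying the hypotheses of Pr\'ekopa's marginal theorem for the integrands that occur. A robust route is to establish everything first on smooth bounded approximating domains, where the killed semigroup and its kernel are classical and the path-integral limit is standard, and then to recover the general convex $\Omega$ by monotone approximation, again using stability of log-concavity under pointwise limits; similarly, one may reduce to bounded, strictly positive, continuous log-concave $u_0$ by truncation and approximation before invoking Pr\'ekopa--Leindler.
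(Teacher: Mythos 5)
Your proposal is correct and follows essentially the same route as the paper: both reduce the theorem to the joint log-concavity of the killed kernel (your Lebesgue-measure kernel $p_\Omega(t,x,y)$ coincides, up to the constant $(2\pi)^{-n/2}$, with the paper's $p^\Omega_\gamma(x,z;t)\,e^{-|z|^2/2}$), and both prove this key lemma by discretising in time into products of jointly log-concave Mehler factors restricted to the convex $\Omega$, integrating out the intermediate variables via Pr\'ekopa's theorem, and passing to the pointwise limit, after which the log-concavity of $u(\cdot,t)$ follows from the representation formula and Pr\'ekopa (equivalently, your Pr\'ekopa--Leindler argument). The justification you flag as the main obstacle---existence of the kernel and pointwise convergence of the time-sliced integrals---is exactly what the paper supplies through the Trotter product formula for the semigroup generated by $L_\Omega$, combined with the monotonicity of the dyadic subsequence $g_{2^m}$, which upgrades $L^2$-convergence to pointwise convergence.
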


The starting point for the proof of this results is that the solution $u=u(x,t)$ can be expressed as follows
\begin{equation}\label{1}
u(x,t)= \int_{\Omega}p_{\gamma}^{\Omega} (x,z;t)u_0(z) d\gamma(z)
\end{equation}
(where $\gamma$ is the Gaussian measure). Here $p_\gamma^\Omega$ is the kernel of the parabolic Ornstein-Uhlenbeck operator, which eventually coincides with the fundamental solution of the equation in \eqref{H-PDE2}, that is, the solution of the following problem:
\begin{equation}\label{parabolic-equation1}
\left\{
\begin{aligned}
& \Delta_x p_\gamma^\Omega(x,z;t)-(x,\nabla_x p_\gamma^\Omega) =\partial_t p_\gamma^\Omega(x,z;t), \quad\forall\ x,z\in\Omega, \ t>0,  \\
& p_\gamma^\Omega(x,z;t)=0, \quad\mbox{if $x\in\partial\Omega$ or $z\in\partial\Omega$, $t>0$},  \\
& p_\gamma^\Omega(x,z;0)d\gamma(z)=\delta(x-z),  \quad  \forall\ x,z\in\Omega,
\end{aligned}
\right.
\end{equation}
where $\delta(x)$ is the Dirac distribution. Very recently, \cite{SUN-Wang2025} proves the partial log-concavity of kernel when $\Omega$ is bounded and convex, that is, fixed $z\in \Omega$ and $t>0$, $p^\Omega_\gamma (x,z;t)$ is log-concave in $x\in \Omega$, and the proof relies on an elliptic method. In the present work, we provide a complete answer through an alternative method.

We point out that, as for the usual heat kernel, although an explicit expression for the entire space $\R^n$ is well known (see for instance \cite{Book-Markov-Diffu,Evans}),
 no explicit formula for $p_\gamma^\Omega$ is available when $\Omega$ is a generic domain.
However, it was shown in \cite{BL76} that the Trotter product formula can be used to deduce the log-concavity of the usual heat kernel when the domain is open, bounded and convex. Applying the same strategy, we will obtain the log-concavity of the kernel of the Ornstein-Uhlenbeck parabolic operator. Precisely, we establish the following theorem, which may be considered in fact the main result of this paper.
\begin{theorem}\label{Lemma-1}
Let $\Omega\subset \R^n$ be a bounded, convex open set. Then, for every $t>0$, the function
$$
(x,z)\, \rightarrow\,p^\Omega_\gamma (x,z;t) e^{-|z|^2/2},\quad(x,z)\in\Omega\times\Omega
$$
is log-concave.
\end{theorem}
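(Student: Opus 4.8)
The plan is to follow the strategy of Brascamp and Lieb \cite{BL76}, replacing the free heat semigroup by the free (whole-space) Ornstein-Uhlenbeck semigroup and keeping careful track of the Gaussian weights. First I would record the explicit whole-space kernel. On $\R^n$ the kernel of the parabolic OU operator is the Mehler kernel
$$
M_s(x,z) = (1-e^{-2s})^{-n/2}\exp\!\left(\frac{2e^{-s}\,x\cdot z-e^{-2s}(|x|^2+|z|^2)}{2(1-e^{-2s})}\right),
$$
normalized so that $(T_sf)(x)=\int_{\R^n}M_s(x,z)f(z)\,d\gamma(z)$. The computational heart is the following whole-space weighted log-concavity: the function $K_s(x,z):=M_s(x,z)\,e^{-|z|^2/2}$ is jointly log-concave on $\R^n\times\R^n$. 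Writing $a=e^{-s}\in(0,1)$, the Hessian of $\log K_s$ equals $\frac{1}{1-a^2}$ times the block matrix $\left(\begin{smallmatrix}-a^2I & aI\\ aI & -I\end{smallmatrix}\right)$; since each block is a scalar multiple of $I_n$, its negative semidefiniteness is equivalent to that of the $2\times2$ symbol $\left(\begin{smallmatrix}-a^2 & a\\ a & -1\end{smallmatrix}\right)$, which has trace $-(1+a^2)<0$ and determinant $0$, hence eigenvalues $0$ and $-(1+a^2)$. Thus $K_s$ is log-concave. Note the asymmetry: the weight $e^{-|z|^2/2}$ on the \emph{second} variable is exactly what is needed, and is the reason the theorem weights $z$ rather than $x$.

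Next I would set up the Trotter product formula on $\Omega$. As in \cite{BL76}, the Dirichlet OU semigroup on $\Omega$ arises from the Trotter-Kato product formula applied to the whole-space OU generator together with the (singular) killing potential associated to $\Omega$: writing $P_\Omega$ for multiplication by $\1_\Omega$, one has $e^{tL_\Omega}=\lim_{k\to\infty}(T_{t/k}P_\Omega)^k$ in the appropriate sense. At the level of kernels this yields, with $x_0=x$, $x_k=z$ and $s=t/k$,
$$
p^\Omega_\gamma(x,z;t)=\lim_{k\to\infty}\int_{\Omega^{k-1}}\ \prod_{j=0}^{k-1}M_s(x_j,x_{j+1})\ \prod_{j=1}^{k-1}d\gamma(x_j),
$$
the integral representing OU paths from $x$ to $z$ that are killed upon leaving $\Omega$.

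To assemble the conclusion, I would multiply by $e^{-|z|^2/2}$ and expand each $d\gamma(x_j)\propto e^{-|x_j|^2/2}\,dx_j$, so that the total Gaussian weight becomes $\prod_{j=1}^{k}e^{-|x_j|^2/2}$, with \emph{no} weight on $x_0=x$. Attaching each weight to the Mehler factor in which the corresponding variable appears as the second argument, and writing the restriction to $\Omega^{k-1}$ as the insertion of $\prod_{j=1}^{k-1}\1_\Omega(x_j)$ with integration over $\R^{n(k-1)}$, turns the integrand into
$$
\prod_{j=0}^{k-1}K_s(x_j,x_{j+1})\ \prod_{j=1}^{k-1}\1_\Omega(x_j),
$$
up to a multiplicative constant depending only on $k,t,n$. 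Each factor $K_s(x_j,x_{j+1})$ is log-concave in $(x_j,x_{j+1})$ by the first step, each $\1_\Omega(x_j)$ is log-concave since $\Omega$ is convex, and products of log-concave functions are log-concave; hence the integrand is jointly log-concave in $(x,x_1,\dots,x_{k-1},z)$. By the Pr\'ekopa theorem (the marginalization form of Pr\'ekopa-Leindler), integrating out $x_1,\dots,x_{k-1}$ produces a log-concave function of $(x,z)$. Finally $p^\Omega_\gamma(x,z;t)\,e^{-|z|^2/2}$ is the pointwise limit of these log-concave approximants, and pointwise limits of log-concave functions are log-concave, which gives the theorem.

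The computational core and the combinatorial bookkeeping of the weights are straightforward once the Mehler kernel is written down; I expect the genuine obstacle to be the rigorous justification of the Trotter step. One must verify that the Trotter-Kato product formula applies to the decomposition of $L$ into the whole-space OU generator and the killing potential of $\Omega$, that the limit indeed identifies the Dirichlet kernel $p^\Omega_\gamma$, and that the convergence is strong enough --- e.g. pointwise almost everywhere, possibly along a subsequence --- to transfer log-concavity to the limit. This is precisely the step carried out for the heat semigroup in \cite{BL76}, which I would adapt; the only substantive difference is the first-order drift $-(x,\nabla)$ and the accompanying Gaussian weights, which are absorbed by the weight bookkeeping described above.
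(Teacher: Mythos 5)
Your proposal is correct and follows essentially the same route as the paper: the joint log-concavity of the Gaussian-weighted Mehler kernel $p_\gamma(x,z;s)e^{-|z|^2/2}$ (which you verify by an explicit Hessian computation, where the paper's Proposition 3.1 calls it trivial), the Trotter product formula for the Dirichlet Ornstein--Uhlenbeck semigroup with the restriction/killing operator $\chi_\Omega$, the same attachment of each Gaussian weight to the second argument of the corresponding Mehler factor so the discretized integrand becomes a product of log-concave factors and indicators, Pr\'ekopa marginalization, and passage to the pointwise limit. The one step you flag as needing work --- identifying the Trotter limit with $p^\Omega_\gamma$ in a pointwise sense --- is exactly what the paper supplies via its Section 3 form-sum construction and the monotone-decreasing subsequence $g_{2^m}$ argument in its proof.
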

Then, this fact, along with the representation formula \eqref{1}, ensures Theorem \ref{T1}, by a standard argument based on Pr\'ekopa's theorem.

As a consequence of Theorem \ref{T1}, we will obtain new proofs, {\it \`a la} Brascamp and Lieb, of two results established in \cite{CFLS24}: the log-concavity of the first eigenfunction of the Ornstein-Uhlenbeck operator in convex domains and a Brunn-Minkowski type inequality for the relevant eigenvalue. 
Let us describe in more details these results.

Given an open and bounded set $\Omega$ in $\R^n$, with Lipschitz boundary, there exists a unique positive number $\lambda_\gamma(\Omega)$, such that the following problem
\begin{equation}\label{Lambda-Gauss-intro}
\left\{
\begin{aligned}
&\Delta u-(x,\nabla u)=-\lambda_\gamma(\Omega) u\quad\mbox{in $\Omega$,}\\\
& u>0\quad\mbox{in $\Omega$,}\\
& u=0\quad\mbox{on $\partial \Omega$},
\end{aligned}
\right.
\end{equation}
admits a solution. $\lambda_\gamma(\Omega)$ is called the first eigenvalue of the Ornstein-Uhlenbeck operator and can be also characterized as follows,
$$
\lambda_\gamma(\Omega)=\inf\left\{\frac{\int_\Omega |\nabla v|^2d\gamma}{\int_\Omega v^2d\gamma}\,: v\in W_0^{1,2}(\Omega),\, v\not\equiv 0\right\}\,,
$$
where $d\gamma$ stands for integration with respect to the Gauss measure $\gamma$, defined as
$$
\gamma(A)=\frac1{(2\pi)^{n/2}}\int_A e^{-|x|^2/2}\d x.
$$
The solution of problem \eqref{Lambda-Gauss-intro} is unique up to the multiplication by a (positive) number, and it is called eigenfunction of the eigenvalue $\lambda_\gamma(\Omega)$.
Similarly to the case of the Laplace operator, we can reprove the two following properties.

\begin{theorem}[Theorem 1.2 in \cite{CFLS24}]\label{Application2} Let $s\in [0,1]$. Let $\Omega_0$, $\Omega_1$, and
\[
\Omega_s=(1-s)\Omega_0+s\Omega_1:=\{(1-s)x_0+sx_1:\ x_0\in \Omega_0,\ x_1\in \Omega_1\}
\]
be open bounded subsets of $\R^n$, $(n\geq 2)$, with Lipschitz boundary. Then
\[
\lambda_\gamma(\Omega_s)\leq (1-s)\lambda_\gamma(\Omega_0)+s\lambda_\gamma(\Omega_1).
\]
\end{theorem}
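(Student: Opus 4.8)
The plan is to follow the Brascamp--Lieb scheme, converting the log-concavity of the kernel into a Brunn--Minkowski inequality for $\lambda_\gamma$ through the long-time asymptotics of the flow together with the Pr\'ekopa--Leindler inequality. First I would record a spectral characterization of the eigenvalue. The operator $v\mapsto\Delta v-(x,\nabla v)$ with homogeneous Dirichlet conditions is self-adjoint and has compact resolvent in $L^2(\Omega,\gamma)$, so it admits an orthonormal basis of eigenfunctions $\{\phi_k\}$ with eigenvalues $0<\lambda_\gamma(\Omega)=\lambda_1<\lambda_2\le\cdots$ and $\phi_1>0$ in $\Omega$, and the kernel expands as $p_\gamma^\Omega(x,z;t)=\sum_k e^{-\lambda_k t}\phi_k(x)\phi_k(z)$. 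Setting
\[
F_\Omega(t):=\int_\Omega\int_\Omega p_\gamma^\Omega(x,z;t)\,d\gamma(x)\,d\gamma(z)=\sum_k e^{-\lambda_k t}\Big(\int_\Omega\phi_k\,d\gamma\Big)^2,
\]
and using $\int_\Omega\phi_1\,d\gamma>0$, one gets $e^{\lambda_1 t}F_\Omega(t)\to(\int_\Omega\phi_1\,d\gamma)^2>0$, so that $\lambda_\gamma(\Omega)=-\lim_{t\to+\infty}\frac1t\log F_\Omega(t)$.

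The core of the argument, and the step I expect to be the main obstacle, is a \emph{cross-domain} version of Theorem \ref{Lemma-1}: for every $t>0$, $s\in[0,1]$ and $x_0,z_0,x_1,z_1\in\R^n$, writing $\bar x=(1-s)x_0+sx_1$ and $\bar z=(1-s)z_0+sz_1$,
\[
p^{\Omega_s}_\gamma(\bar x,\bar z;t)\,e^{-|\bar z|^2/2}\ \ge\ \Big(p^{\Omega_0}_\gamma(x_0,z_0;t)\,e^{-|z_0|^2/2}\Big)^{1-s}\Big(p^{\Omega_1}_\gamma(x_1,z_1;t)\,e^{-|z_1|^2/2}\Big)^{s},
\]
with the convention that each kernel vanishes off its own domain. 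I would obtain this by rerunning the Trotter-product proof of Theorem \ref{Lemma-1} simultaneously on the three domains: the free (Mehler) kernel already satisfies the required pointwise Pr\'ekopa--Leindler inequality along Minkowski combinations, while the Dirichlet restriction enters only through the indicators, for which $\1_{\Omega_0}(x_0)\,\1_{\Omega_1}(x_1)\le\1_{\Omega_s}(\bar x)$ holds by the very definition of $\Omega_s$. The key point is that this last inequality requires no convexity of $\Omega_0,\Omega_1,\Omega_s$; this is exactly what permits the conclusion for general Lipschitz (not necessarily convex) domains, with Theorem \ref{Lemma-1} recovered as the diagonal case $\Omega_0=\Omega_1$.

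Finally I would combine the two ingredients. Multiplying the cross-domain inequality by the bound $e^{-|\bar x|^2/2}\ge(e^{-|x_0|^2/2})^{1-s}(e^{-|x_1|^2/2})^{s}$ for the remaining Gaussian weight (a consequence of the convexity of $x\mapsto|x|^2$), the functions $G_\Omega(x,z;t):=p^\Omega_\gamma(x,z;t)\,e^{-(|x|^2+|z|^2)/2}$ satisfy $G_{\Omega_s}(\bar x,\bar z;t)\ge G_{\Omega_0}(x_0,z_0;t)^{1-s}G_{\Omega_1}(x_1,z_1;t)^{s}$ for all $(x_0,z_0),(x_1,z_1)\in\R^{2n}$. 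Since $(2\pi)^{n}F_\Omega(t)=\int_{\R^n}\int_{\R^n}G_\Omega(x,z;t)\,dx\,dz$, the Pr\'ekopa--Leindler inequality in $\R^{2n}$ yields $F_{\Omega_s}(t)\ge F_{\Omega_0}(t)^{1-s}F_{\Omega_1}(t)^{s}$, the normalizing constant cancelling. Applying $-\frac1t\log(\cdot)$ and letting $t\to+\infty$, the spectral characterization gives $\lambda_\gamma(\Omega_s)\le(1-s)\lambda_\gamma(\Omega_0)+s\lambda_\gamma(\Omega_1)$, as claimed.
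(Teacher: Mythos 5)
Your proposal is correct, and it follows the same overall Brascamp--Lieb blueprint as the paper --- Trotter product formula, Pr\'ekopa--Leindler, then long-time asymptotics --- but the implementation genuinely differs in two ways. The paper works with the heat trace $Z_\Omega(t)=\int_\Omega p_\gamma^\Omega(x,x;t)\,d\gamma(x)=\sum_k e^{-\lambda_k t}$: it applies Pr\'ekopa--Leindler (in the form of \cite[Corollary 3.4]{BL76}) directly to the closed-loop Trotter approximants $G_{m,\Omega}(t)$, obtains $Z_{\Omega_s}(t)\ge Z_{\Omega_0}(t)^{1-s}Z_{\Omega_1}(t)^{s}$, and concludes from $e^{\lambda_1 t}Z_\Omega(t)\to 1$, which holds with no further input since the eigenfunctions are normalized. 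You instead work with the heat content $F_\Omega(t)=\int_\Omega\int_\Omega p_\gamma^\Omega\,d\gamma\,d\gamma=\sum_k e^{-\lambda_k t}\bigl(\int_\Omega\phi_k\,d\gamma\bigr)^2$, whose asymptotics need the additional (true) facts that $\int_\Omega\phi_1\,d\gamma>0$ (positivity of the first eigenfunction) and that the tail $\sum_{k\ge2}e^{-(\lambda_k-\lambda_1)t}\bigl(\int_\Omega\phi_k\,d\gamma\bigr)^2$ vanishes as $t\to\infty$, which follows from Proposition \ref{p2} and Cauchy--Schwarz. More substantially, you first establish a pointwise cross-domain inequality between the three kernels and only afterwards integrate, paying one extra application of Pr\'ekopa--Leindler in $\R^{2n}$; this intermediate statement is strictly stronger than the trace inequality the paper derives, and it is a nice by-product that contains Theorem \ref{Lemma-1} as the diagonal case $\Omega_0=\Omega_1$. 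Your route also has a real expository advantage: the theorem allows non-convex Lipschitz domains, and your argument makes transparent that only the log-concavity of the free Mehler kernel (Proposition \ref{log-concavity proposition}) and the indicator inequality $\1_{\Omega_0}(x_0)\,\1_{\Omega_1}(x_1)\le\1_{\Omega_s}((1-s)x_0+sx_1)$ are used, with no convexity of the domains; the paper's write-up instead nominally invokes Theorem \ref{Lemma-1}, which assumes convexity, even though the mechanism behind its inequality $G_{m,\Omega_s}\ge G_{m,\Omega_0}^{1-s}G_{m,\Omega_1}^{s}$ is exactly the convexity-free one you describe. In short: same strategy, but your decomposition is a legitimate variant that trades a little economy for a stronger intermediate lemma and a cleaner treatment of the non-convex case.
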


\begin{theorem}[Theorem 1.7 in \cite{CFLS24}]\label{Application1}
Let $\Omega$ be an open bounded and convex subset of $\R^n$. Let $u$ be a solution of problem \eqref{Lambda-Gauss-intro}. Then $u$ is log-concave in $\Omega$.
\end{theorem}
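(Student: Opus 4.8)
The plan is to obtain the log-concavity of the eigenfunction from Theorem \ref{T1} via the large-time asymptotics of the parabolic flow \eqref{H-PDE2}, in the classical Brascamp--Lieb spirit. Since $\Omega$ is bounded, the Ornstein--Uhlenbeck operator $L=\Delta-(x,\nabla\,\cdot\,)$ with Dirichlet boundary conditions is self-adjoint on $L^2(\Omega,\gamma)$ and has compact resolvent; hence it possesses a discrete spectrum $\lambda_\gamma(\Omega)=\lambda_1<\lambda_2\le\lambda_3\le\cdots$ and a complete orthonormal basis $\{u_k\}_{k\ge1}$ of eigenfunctions in $L^2(\Omega,\gamma)$, where the first eigenvalue is simple and $u_1$ is (up to normalization) the positive eigenfunction $u$ of problem \eqref{Lambda-Gauss-intro}.

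First I would fix a strictly positive, log-concave initial datum, for instance $u_0\equiv 1$ on $\Omega$ (which lies in $L^2(\Omega,\gamma)$ because $\gamma(\Omega)<\infty$). Expanding $u_0=\sum_{k\ge1}c_k u_k$ and using that the solution of \eqref{H-PDE2} evolves as
$$
u(x,t)=\sum_{k\ge1}c_k\,e^{-\lambda_k t}\,u_k(x),
$$
one gets $e^{\lambda_1 t}u(x,t)\to c_1 u_1(x)$ as $t\to+\infty$. Because both $u_0>0$ and $u_1>0$ in $\Omega$, the leading coefficient $c_1=\langle u_0,u_1\rangle_\gamma$ is strictly positive, so the limit is a positive multiple of the very eigenfunction we wish to study.

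Next I would invoke Theorem \ref{T1}: the map $x\mapsto u(x,t)$ is log-concave on $\Omega$ for every $t>0$, and this property is manifestly preserved under multiplication by the positive constant $e^{\lambda_1 t}$. Since log-concavity is stable under pointwise limits of positive functions, letting $t\to+\infty$ shows that $c_1 u_1$, and therefore $u$ itself, is log-concave, as claimed.

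The main obstacle is to make the convergence $e^{\lambda_1 t}u(\cdot,t)\to c_1 u_1$ rigorous in a topology strong enough to transmit log-concavity to the limit. The spectral gap $\lambda_2>\lambda_1$ yields convergence in $L^2(\Omega,\gamma)$ at an exponential rate, but log-concavity is a pointwise (segment-wise) condition, so I would upgrade this to locally uniform convergence on $\Omega$ by means of interior parabolic regularity estimates for the rescaled solutions $e^{\lambda_1 t}u(\cdot,t)$, which solve a uniformly parabolic equation with smooth coefficients on every compact subset of $\Omega$. Once locally uniform convergence to the positive function $c_1 u_1$ is secured, the concavity inequality for $\log\!\big(e^{\lambda_1 t}u(\cdot,t)\big)$ passes to the limit on each segment contained in $\Omega$, which completes the proof.
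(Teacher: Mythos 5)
Your proposal is correct, and it shares the paper's Brascamp--Lieb strategy (large-time limit of the log-concavity-preserving evolution), but the route is genuinely different in two respects. First, the object you pass to the limit: you evolve the particular datum $u_0\equiv 1$ and invoke Theorem \ref{T1}, whereas the paper never chooses an initial datum at all --- it rescales the kernel itself and proves
\begin{equation*}
\lim_{t\to+\infty}e^{\lambda_1 t}\,p_\gamma^{\Omega}(x,z;t)\,e^{-|z|^2/2}=u(x)\,u(z)\,e^{-|z|^2/2}
\qquad\text{for all }x,z\in\Omega,
\end{equation*}
so that the \emph{joint} log-concavity in $(x,z)$ supplied by Theorem \ref{Lemma-1} passes to the limit and yields the log-concavity of $u$ directly; this also dispenses with your verification that $c_1=\int_\Omega u\,d\gamma>0$. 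Second, the convergence mechanism: the paper's is purely spectral --- by Proposition \ref{p1}, $e^{\lambda_1 t}p^\Omega_\gamma(x,z;t)-u(x)u(z)=\sum_{k\ge2}e^{(\lambda_1-\lambda_k)t}\phi_k(x)\phi_k(z)$, and Cauchy--Schwarz combined with the diagonal bound $\sum_{k}e^{-\lambda_k}\phi_k(x)^2=p^\Omega_\gamma(x,x;1)\le c_0(1-e^{-2})^{-n/2}$ (Proposition \ref{p2}) and the spectral gap $\lambda_2>\lambda_1$ gives convergence that is \emph{uniform} on $\Omega\times\Omega$, with no parabolic regularity theory. Your upgrade from $L^2$ to locally uniform convergence via interior parabolic estimates is legitimate (the difference $e^{\lambda_1 t}u(\cdot,t)-c_1u$ solves the same uniformly parabolic equation on compact subsets of $\Omega$, because $u$ is a stationary solution of the rescaled equation), but it imports machinery the paper avoids; if you prefer to stay inside the paper's toolkit, the same Cauchy--Schwarz/diagonal-kernel estimate applied to $\sum_{k\ge2}c_k e^{(\lambda_1-\lambda_k)t}\phi_k(x)$ gives the locally uniform (indeed uniform) convergence at once. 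What your version buys is conceptual economy: Theorem \ref{T1} is used as a black box, exactly in the spirit of Brascamp and Lieb's treatment of the heat flow. What the paper's version buys is a quantitative, uniform asymptotic for the full kernel, which it then reuses verbatim in the proof of Theorem \ref{Application2}.
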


We mention that in \cite{CFLS24} the authors provide two proofs of the previous result, one based on the {\it log-concave envelope}, and another one based on the {\it constant rank theorem} (see also \cite{CCLaMP,Colesanti2025,LeiQin2025} for related topics).



\medskip

The organization of the paper is as follows. In Section \ref{Se2}, we list some basic facts and introduce some notations. In Section \ref{Se3}, we introduce the Ornstein-Uhlenbeck semigroup and the Trotter product formula. In Section \ref{Se4}, we give the proof of  Theorem \ref{Lemma-1} and Theorem \ref{T1}. In Section \ref{Se5}, we prove Theorem \ref{Application1} and Theorem \ref{Application2}. In the Appendix, we present the semigroup theory for Ornstein-Uhlebeck in the class of bounded Lipschitz domains.

\medskip

{\bf Acknowledgments.} The second author would like to thank the Department of Mathematics and Computer Science of the University of Florence for the hospitality.
The first author was partially supported by INdAM through different GNAMPA projects, and by the Italian "Ministero dell'Universit\`a e Ricerca" and EU through different PRIN projects. The Third author has been partially financed by European Union -- Next Generation EU -- through the project "Geometric-Analytic Methods for PDEs and Applications (GAMPA)", within the PRIN 2022 program (D.D. 104 - 02/02/2022 Ministero dell'Universit\`a e della Ricerca).

\section{Preliminaries}\label{Se2}

We work in the $n$-dimensional Euclidean space $\mathbb{R}^n$, $n\geq 2$, and we denote by $|\cdot|$ the Euclidean norm, and by $(\cdot,\cdot)$ the standard scalar product.  Given $\Omega\subset\R^n$, let $C_{c}^{\infty}(\Omega)$ be the set of functions from $C^{\infty}(\Omega)$ having compact support in $\Omega$.

We denote the Gauss probability space by $( \mathbb{R}^{n}, \gamma)$; the measure $\gamma$ is given by
$$
\gamma(E)= \frac{1}{(2\pi)^{n/2}} \int_{E} e^{-|x|^2/2} dx,
$$
for any measurable set $E\subseteq \mathbb{R}^{n}$. We use the notation $dx$ for integration with respect to Lebesgue measure in $\R^n$, while $d\gamma$ stands for integration with respect to $\gamma$, i.e., $d\gamma(x)=(2\pi)^{-n/2} e^{-|x|^2/2}dx$.

Let $\Omega\subseteq \R^n$ be an open set. We use the standard notations $L^2(\Omega)$, $W^{1,2}(\R^n)$, $W^{2,2}(\Omega)$ and $W^{1,2}_0(\Omega)$ for Lebesgue and Sobolev spaces. We also set
\begin{equation}\nonumber
L^2(\Omega,\gamma)=\left\{u:\Omega\rightarrow \R\colon \mbox{$u$ is measurable and}\ \|u\|^{2}_{2,\gamma}:=\int_{\Omega} |u(x)|^2 d\gamma(x) <+\infty \right\}.
\end{equation}
Similarly, we define the space
\begin{equation}\nonumber
W^{1,2}(\Omega,\gamma)=\left\{u\in W_{\text{loc}}^{1,2}(\R^n)\colon\|u\|^2_{W^{1,2}(\Omega,\gamma)}:=\int_{\Omega}\left(|\nabla u(x)|^{2} +|u(x)|^2\right) d\gamma (x)<+\infty \right\}.
\end{equation}
$W^{1,2}_0(\Omega,\gamma)$ denotes the closure of $C^\infty_c(\Omega)$ in $W^{1,2}(\Omega,\gamma)$.

The scalar product of the Hilbert space $L^2(\Omega,\gamma)$ is defined as follows,
\begin{equation}
(u,\upsilon)_{L^2}=\int_{\Omega}u\upsilon d\gamma,\quad \forall\, u,\upsilon\in L^2(\Omega,\gamma).
\end{equation}
By the weighted Poincar\'{e} inequality, the associated scalar product of the Hilbert space $W^{1,2}_0(\Omega,d\gamma)$ can be given as
\[
(u,\upsilon)_{W_0^{1,2}}=\int_{\Omega}\nabla u\cdot \nabla \upsilon d\gamma,\quad \mathrm{for\ any}\ u,\upsilon\in W^{1,2}_0(\Omega,\gamma).
\]

The spaces $W^{1,2}(\Omega,\gamma)$ and $W^{2,2}(\Omega,\gamma)$ are defined in a similar way. These Sobolev spaces are all separable Hilbert spaces with respect to the Gaussian measure (see for instance \cite{Tero1994,Radulescu2019} and references therein). When $\Omega$ is bounded the density function $e^{-|x|^2/2}$ has positive upper and lower bounds in $\Omega$, hence it is obvious that $L^2(\Omega,\gamma)=L^2(\Omega)$, $W^{1,2}(\Omega,\gamma)=W^{1,2}(\Omega)$, and so on. However, we only have $W^{1,2}(\Omega)\subset W^{1,2}(\Omega,\gamma)$ and $W^{1,2}_0(\Omega)\subset W^{1,2}_0(\Omega,\gamma)$ when $\Omega$ is unbounded.

\subsection{The Ornstein-Unlenbeck operator and its spectrum}\label{Se2-1}

Let $\Omega$ be an open subset of $\R^n$. The Ornstein-Uhlenbeck operator $L_\Omega$ relative to $\Omega$, is defined as follows
\[
L_\Omega u=\Delta u-(x,\nabla u).
\]
It will be important to identify the domain of this operator. When $\Omega=\R^n$, we set
$$
\text{Dom}(L_{\R^n}):=W^{2,2}(\R^n,\gamma).
$$
For a general open set $\Omega$, we just note now that $L_\Omega$ is well defined on $W^{2,2}(\Omega,\gamma)$; later on we restrict $L_\Omega$ to a slightly different space of functions.

Integration by parts shows that $L_\Omega$ is self-adjoint with respect to the Gaussian measure, in the following sense: for every $\phi,\psi \in C_{c}^\infty (\Omega)$,
\[
\int_{\Omega} \phi L_\Omega\psi d\gamma=-\int_{\Omega} (\nabla \phi, \nabla \psi)d\gamma=\int_{\Omega} \psi L_\Omega\phi d\gamma.
\]

\medskip

For an open bounded set $\Omega\subset\R^n$, with Lipschitz boundary, it is well-known (and it follows via, say, a variational argument), that there exists a unique positive number $\lambda_{\gamma}(\Omega)$ such that the boundary value problem \eqref{Lambda-Gauss-intro}
admits a solution $u\in W^{1,2}_0(\Omega,\gamma)$. Moreover, the solution is unique up to multiplication by positive constants (see \cite[Theorem 2.1]{CFLS24}). When no ambiguity may arise, we will write $\lambda_\gamma$ instead of $ \lambda_\gamma(\Omega)$.

In view of the previous definition, $\lambda_\gamma$ is an eigenvalue of the operator $L_\Omega$. By standard spectral theory (see \cite[Theorem 4.46]{Arendt-Urban2023}), the Ornstein-Uhlenbeck operator has, more generally, a countable set of eigenvalues $\Sigma=\{\lambda_i\in \R^{+}| 0<\lambda_\gamma=\lambda_1<\lambda_2\leq \cdots \leq \lambda_k\leq\cdots \}$, with $\lambda_k\rightarrow +\infty\ \text{as}\ k\rightarrow +\infty$, whose normalized eigenfunctions $\{ \phi_k\}$ are mutually orthogonal and span $W^{1,2}_0(\Omega,\gamma)$ (in $L^2(\Omega,\gamma)$ as well). By the regularity theory of elliptic partial differential equations, $\phi_k \in C^{\infty}(\Omega)\cap C^0(\overline{\Omega})$ for every $k$ (cf. \cite{Evans}).

\section{The Ornstein-Uhlenbeck semigroup and the Trotter product formula}\label{Se3}

In this section we show that, if $\Omega$ is a bounded open set with Lipschitz boundary, the semigroup generated by $L_\Omega$ can be obtained from the semigroup relative to $L_{\R^n}$ via Trotter product formula. Our general references for the theory of semigroups are \cite{Arendt-handbook,Pazy1983}.

Let $(P_\gamma(t))_{t\geq 0}$ be the Ornstein-Uhlenbeck semigroup with generator operator $-L_{\R^n}$. It is well-known (see for instance \cite{Book-Markov-Diffu}) that $P_\gamma(t)$ admits a kernel density, called Mehler kernel, with respect to the Lebesgue measure in $\R^n$. This means that there exists a function $p=p(x,z;t)$, with $x,z\in\R^n$, and $t>0$, such that
\begin{equation}\label{Pf}
[P_\gamma(t)f](x)=\int_{\R^n}p(x,z,t)f(z)dz\,,
\end{equation}
for any $f\in L^2(\R^n)$.

\subsection{Properties of the Mehler kernel}

The Mehler kernel can be written in an explicit form: for every $t>0$ and $(x,z)\in \R^n \times \R^n$, we have
\begin{equation}\label{heat-kernel-2}
p(x,z;t)=\frac{1}{(2\pi(1-e^{-2t}))^{n/2}} \mbox{exp} \left( -\frac{|z-e^{-t}x|^2}{2(1-e^{-2t})}\right).
\end{equation}
We observe that $p$ is log-concave with respect to $(x,z)$, for every $t>0$. In other words, for every $t>0$ the function:
$$
(x,z)\, \rightarrow\,\log(p(x,z;t)),\quad(x,z)\in\R^n\times\R^n,
$$
is concave.
\begin{remark}
It can be noticed that $p$ shares indeed some stronger concavity property than log-concavity, so that, as it was done in \cite{IST2020, IST2024} for the usual heat flow, we may conjecture that such stronger properties are preserved by the Gaussian heat flow, and that this would possibly lead to an improvement of Theorem \ref{Application1}.
\end{remark}

Next, notice that
\begin{equation}\label{kernels relation}
\mbox{exp} \left( -\frac{|z-e^{-t}x|^2}{2(1-e^{-2t})}\right)=
\text{exp} \left( -\frac{|x|^2-2e^t (x,z)+|z|^2}{2(e^{2t}-1)}\right)\,e^{-|z|^2/2}.
\end{equation}
Hence, \eqref{Pf} can be rewritten as follows with respect to the Gaussian measure:
\begin{equation}\label{aggiunta}
[P_\gamma(t)f](x)=\int_{\R^n}p_{\gamma}(x,z,t)f(z)d\gamma(z),
\end{equation}
for every $f\in L^2(\R^n,\gamma)$, where
\begin{equation}\label{heat-kernel-1}
p_\gamma(x,z;t):= \frac{1}{(1-e^{-2t})^{n/2}} \text{exp} \left( -\frac{|x|^2-2e^t (x,z)+|z|^2}{2(e^{2t}-1)}\right).
\end{equation}
By a change of variable, \eqref{aggiunta} and \eqref{heat-kernel-1} lead to:
\[
[P_\gamma(t)f](x)=\int_{\R^n}f(e^{-t}x+\sqrt{1-e^{-2t}}y) d\gamma(y), \ \ \ \ t\geq 0,\ \ x\in \R^n.
\]
It can be verified that
\begin{equation}
\int_{\R^n}\int_{\R^n} p_\gamma^2(x,z;t)d\gamma(x)d\gamma(z)<\infty.
\end{equation}
This implies that $P_\gamma(t)(t>0)$ has the Hilbert-Schmidt property (cf. \cite[p.104 and Section A.6, p.483]{Book-Markov-Diffu}).

The following simple properties will be repeatedly used in this article, so we enlighten them in the following proposition.
\begin{proposition}\label{log-concavity proposition} For every $t>0$, the function
$$
(x,z)\, \rightarrow\,p_\gamma (x,z;t) e^{-|z|^2/2},\quad(x,z)\in\R^n\times\R^n
$$
is log-concave. Moreover, if $g\colon\R^n\to\R$ is log-concave, then, for every $t>0$, the function $G\colon\R^n\to\R$ defined by
$$
G(x)=\int_{\R^n} g(z)p_\gamma(x,z;t)d\gamma(z),
$$
is log-concave.
\end{proposition}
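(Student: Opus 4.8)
The plan is to reduce both assertions to facts already recorded in the excerpt: the explicit Gaussian form of the kernel, together with Pr\'ekopa's theorem on marginals of log-concave functions.

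For the first assertion, I would start from the identity \eqref{kernels relation}, which rewrites the exponential appearing in \eqref{heat-kernel-1}. Combining it with the explicit formula \eqref{heat-kernel-2} for the Mehler kernel, one obtains the clean relation
$$
p_\gamma(x,z;t)\,e^{-|z|^2/2}=(2\pi)^{n/2}\,p(x,z;t)=\frac{(2\pi)^{n/2}}{(2\pi(1-e^{-2t}))^{n/2}}\,\exp\!\left(-\frac{|z-e^{-t}x|^2}{2(1-e^{-2t})}\right).
$$
Taking logarithms, the right-hand side equals an additive constant plus $-\tfrac{|z-e^{-t}x|^2}{2(1-e^{-2t})}$. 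The map $(x,z)\mapsto z-e^{-t}x$ is linear, and $w\mapsto-|w|^2$ is concave, so the composition is a concave function of $(x,z)$; since $1-e^{-2t}>0$ for $t>0$, the coefficient is negative and concavity is preserved. Hence $(x,z)\mapsto p_\gamma(x,z;t)\,e^{-|z|^2/2}$ is log-concave on $\R^n\times\R^n$, as claimed.

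For the second assertion, I would rewrite $G$ against Lebesgue measure, using $d\gamma(z)=(2\pi)^{-n/2}e^{-|z|^2/2}dz$:
$$
G(x)=(2\pi)^{-n/2}\int_{\R^n}g(z)\,\big[p_\gamma(x,z;t)\,e^{-|z|^2/2}\big]\,dz.
$$
The integrand is a product of two log-concave functions of $(x,z)$: the bracketed kernel term, log-concave by the first part, and $g(z)$, which is log-concave in $z$ and hence, not depending on $x$, log-concave jointly in $(x,z)$. A product of log-concave functions is log-concave, so the integrand is log-concave on $\R^n\times\R^n$. Applying Pr\'ekopa's theorem, the $x$-marginal is log-concave, and multiplication by the positive constant $(2\pi)^{-n/2}$ preserves this; thus $G$ is log-concave.

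The computations are entirely routine, so I do not expect a serious obstacle; the only point requiring a little care is to check that Pr\'ekopa's theorem is genuinely applicable, i.e.\ that the integrand is measurable and the marginal $G(x)$ is finite (and not identically zero). This follows from the Gaussian decay in $z$ of the kernel factor together with the local integrability of the log-concave function $g$.
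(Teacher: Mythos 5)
Your proposal is correct and follows essentially the same route as the paper: the paper's proof simply declares the first claim ``trivially verified'' and deduces the second from it via Pr\'ekopa's theorem (citing \cite[Corollary 3.5]{BL76}), which is exactly your argument, with the only difference that you write out the routine computation behind the first claim, namely $p_\gamma(x,z;t)e^{-|z|^2/2}=(2\pi)^{n/2}p(x,z;t)$ and the concavity of $(x,z)\mapsto -\frac{|z-e^{-t}x|^2}{2(1-e^{-2t})}$. Your closing remark on finiteness of $G$ is also sound, since a proper log-concave $g$ is dominated by $Ce^{(v,z)}$ for some $v\in\R^n$, which the Gaussian factor absorbs.
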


\begin{proof}
The first claim is trivially verified. The second claim is a consequence of the first one and of Pr\'ekopa's Theorem (see for instance \cite[Corollary 3.5 ]{BL76}).
\end{proof}

\subsection{Trotter product formula}\label{Se3-1}
Let $\Omega\subset \R^n$ be an open subset of $\R^n$. We introduce the set
$$
\text{Dom}(L_\Omega):=\{ u\in W^{1,2}_0(\Omega,\gamma):\ \text{the\ weak\ Laplacian}\ \Delta u\ \text{of}\ u\ \text{belongs\ to}\ L^2(\Omega,\gamma)\}.
$$
Integration by parts shows that $L_\Omega$ is a self-adjoint operator in $\text{Dom}(L_\Omega)$, with respect to the Gaussian measure: for every $u,\upsilon \in \text{Dom}(L_\Omega)$, we have
\[
(L_\Omega u,\upsilon)=(u,L_\Omega \upsilon)=-\int_{\Omega}(\nabla u, \nabla \upsilon) d\gamma.
\]

By Hille-Yosida theorem, we deduce that the operator $-L_\Omega$ generates a symmetric and ultracontractive $\mathcal{C}_0$-semigroup, denoted by $e^{-tL_\Omega}$ (cf. \cite{Pazy1983}). Using the same arguments as in \cite[Chapter 2]{DaviesBook}, we conclude that the semigroup $e^{-tL_\Omega}$ can be written in the form
\[
e^{-tL_\Omega}f=\int_\Omega p^\Omega_\gamma (x,z;t) f(z) d\gamma(z)
\]
for any $f\in L^2(\Omega,\gamma)$, where the kernel $p^{\Omega}_\gamma(x,z;t)\in L^2(\Omega \times \Omega,\gamma)$, for every $t>0$. In fact, in Section \ref{Se4}, we will prove that the kernel is also the solution of problem \eqref{parabolic-equation1}. The aim of this part is to use the Trotter product formula to write $e^{-tL_\Omega}$ as a limit of a suitable sequence of semigroups.

Let $\Omega\subset \R^n$ be an open subset of $\R^n$, we set
$$
H_1:=\{u\in W^{1,2}(\R^n,\gamma):\ u(x)=0,\quad\forall\ x\in \Omega^c\}.
$$
The following remark is well-known.

\begin{remark}
If $\Omega$ has Lipschitz boundary, every function $u\in W_0^{1,2}(\Omega,\gamma)$ can be extended to $H_1$ as follows
\[
\tilde{u}=\left\{
\begin{aligned}
& u,\quad x\in \Omega, \\
&  0,\quad  x\notin \Omega.
\end{aligned}
\right.
\]
Vice versa, if $u\in H_1$ then its restriction to $\Omega$ belongs to $W^{1,2}_0(\Omega,\gamma)$.
\end{remark}
From now on, we assume that $\partial\Omega$ has Lipschitz boundary.

Let $A=L_{\R^n}$. To $A$ we associate a bilinear form $A(u,\upsilon)$ in $H_1$ , defined by
$$
A(u,\upsilon):=\int_{\R^n} (\nabla u, \nabla \upsilon) d\gamma(x).
$$

The identity operator $\mathbf{I}_0\colon H_1\to W^{1,2}_0(\Omega,\gamma)$ can be written as $\mathbf{I}_0f=\chi_\Omega f$ for all $f\in H_1$, where $\chi_\Omega$ is the characteristic function of $\Omega$. $\mathbf{I}_0$ is surjective, by the previous remark. As stated in \cite{Book-Engel2000,Pazy1983}, $\mathbf{I}_0$ can be viewed as a strongly continuous semigroup with generator $B=0$, that is $e^{tB}:=\mathbf{I}_0$ and $\text{Dom}(B)=H_1$.

The sum of operators $C:=A+B$ is defined via the sum of the bilinear forms of $A$ and $B$, that is,
\[
C(u,\upsilon)=(A+B)(u,\upsilon)=A(u,\upsilon)+B(u,\upsilon),
\]
for all $u,\upsilon\in H_1$. Moreover, $\text{Dom}(C):=\text{Dom}(A)\cap \text{Dom}(B)=H_1$. Clearly, $C$ is also a self-adjoint operator in $H_1$. Therefore, by (trivially) extending the functions of the space $W^{1,2}_0(\Omega,\gamma)$ to be zero outside of $\Omega$, the definitions of the operators $C$ and $L_\Omega$ are consistent. Moreover, $e^{-t(A+B)}$ verifies
\begin{equation}\label{Markov-kernel}
[e^{-tC}]f(x)=\left\{
\begin{aligned}
& [e^{-tL_\Omega}f](x),\quad x\in \Omega, \\
&  0,\quad \ \ \ \ \ \ \ \ \ \ \ \ \ \ \ \ \ x\notin \Omega,
\end{aligned}
\right.
\end{equation}
for all $f\in H_1$, also in $ W^{1,2}_0(\Omega,\gamma)$.

By Trotter product formula \cite[formula (3), page vii]{Book-Totter-product-formula}, we have
\begin{equation}
e^{-t(A+B)}f =\mathop{\lim}_{m\rightarrow \infty} (e^{-\frac{t}{m}A}e^{-\frac{t}{m}B})^m f=\mathop{\lim}_{m\rightarrow \infty} (P_\gamma (t/m)\,\mathbf{I}_0)^m f,
\end{equation}
for all $f\in \text{Dom}(A)\cap \text{Dom}(B)$. From now on, for more clarity and simplicity, we will write $P_\gamma (t/m)|_\Omega$ in place of $P_\gamma (t/m)I_0$. Therefore, for $f\in W^{1,2}_0(\Omega,\gamma)$,
\begin{equation}\label{Totter}
e^{-tL_\Omega}f=\mathop{\lim}_{m\rightarrow \infty} (P_\gamma (t/m)|_\Omega)^m f,
\end{equation}
where the convergence is in the sense of $L^2(\Omega,\gamma)$-norm. Hence we can express $e^{-tL_\Omega}$ as a limit of integral operators.

\begin{remark}\label{remark}
In formula \eqref{Totter}, the domain $W_0^{1,2}(\Omega,\gamma)$ can be extended to the Sobolev space $L^2(\Omega,\gamma)$, since the eigenfunctions $\{ \phi_k\}_{k\geq 1}$ span $W^{1,2}_0(\Omega,\gamma)$, also in $L^2(\Omega,\gamma)$.
\end{remark}

For the theory of semigroups for the Ornstein-Uhlenbeck operator, we recommend \cite{Book-Markov-Diffu,Cappa2019} and references therein. However, since the Ornstein-Uhlenbeck operator is linear, all standard references on the theory of semigroups for linear operators in the context of weighted Sobolev spaces, such as \cite{DaviesBook,Book-Engel2000,Pazy1983} and so on, can be directly consulted.

\subsection{Relationship between Ornstein-Uhlenbeck operator and Schr\"{o}dinger operators}\label{Se3-2}

It is well-known how to connect the Ornstein-Uhlenbeck operators and the harmonic oscillator operators, that is, operators of the form $L_V:=-\Delta+V(x)$.

Given $u\in C^2(\Omega)$, we set
\[
w=u e^{-|x|^2/4}.
\]
By a direct calculation we see that $u$ is a solution of problem \eqref{Lambda-Gauss-intro} if and only if $w$ is a solution of
\begin{equation}\nonumber
\left\{
\begin{aligned}
& -\Delta w + V(x)w=\lambda_\gamma w, \ \ \   \\
& w>0\quad\mbox{in $\Omega$}, \\
& w=0\quad\mbox{on $\partial\Omega$},
\end{aligned}
\right.
\end{equation}
where
$$
V(x)=\left(-\frac{n}{2}+\frac{|x|^2}{4} \right)\,.
$$
Moreover, since $-\Delta$ is symmetric with respect to $dx$, then $-\Delta$ is symmetric with respect to $d\mu:=e^{-V}dx$, and the map $f\rightarrow e^{V/2}$ is an isometry between $L^2(\Omega,dx)$ and $L^2(\Omega,d\mu)$. Under these conditions, the operators $-\Delta$ and $-\Delta+V$ have the same spectral properties. In particular, when $\Omega$ is an open and bounded set, the spectrum of $-\Delta$ is discrete and the same holds for the spectrum of $-\Delta+V$. We refer to \cite{Colesanti2025} for more details.


\section{Log-concavity of solution of the Fokker-Planck equation}\label{Se4}

Let us consider the following problem:
\begin{equation}\label{H-PDE1}
\left\{
\begin{aligned}
& \Delta u -(x,\nabla u)=\partial_t u, \quad & \mbox{in $\R^n\times \{t>0\}$,}\\
& u(x,0)=u_0(x),  \quad & \mbox{on $\R^n \times \{ t=0 \}$.}
\end{aligned}
\right.
\end{equation}

It is well-known that the solution to problem \eqref{H-PDE1} is log-concave with respect to the spatial variable $x$, if the initial datum $u_0$ is log-concave (see for instance \cite{Book-Markov-Diffu}). For the sake of completeness, we provide a concise proof of this result.

\begin{proposition}
Let $u_0\in L^2(\R^n,\gamma)$. Then the function $u\colon\R^n\times[0,\infty)\to\R$, defined by
\[
u(x,t)=\int_{\R^n} u_0(e^{-t}x+\sqrt{1-e^{-2t}}y) d\gamma(y),
\]
is a solution of problem \eqref{H-PDE1}. Furthermore, if $u_0$ is log-concave in $\R^n$, then $u(\cdot,t)$ is log-concave for every $t>0$.
\end{proposition}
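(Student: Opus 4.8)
The statement has two parts: first, that the given integral formula solves the Fokker–Planck problem \eqref{H-PDE1}; second, that log-concavity of $u_0$ propagates to $u(\cdot,t)$ for every $t>0$. I would treat these separately, spending most effort on verifying the formula and essentially none on the concavity, since the latter has been packaged into Proposition \ref{log-concavity proposition}.

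\emph{Part 1 (solving the PDE).} The change-of-variable computation earlier in the excerpt already records that
\[
\int_{\R^n} u_0(e^{-t}x+\sqrt{1-e^{-2t}}\,y)\, d\gamma(y)=\int_{\R^n} p_\gamma(x,z;t)\,u_0(z)\,d\gamma(z)=[P_\gamma(t)u_0](x),
\]
so $u(\cdot,t)=P_\gamma(t)u_0$ is exactly the action of the Ornstein--Uhlenbeck semigroup whose generator is $-L_{\R^n}$. The verification that this is a solution of \eqref{H-PDE1} is then the standard semigroup fact that $u_t=-L_{\R^n}u=L_{\R^n}u$ wait --- more precisely, $\partial_t u = \frac{d}{dt}P_\gamma(t)u_0 = -(-L_{\R^n})P_\gamma(t)u_0 = L_{\R^n}u = \Delta u-(x,\nabla u)$, with the initial condition $u(\cdot,0)=P_\gamma(0)u_0=u_0$ coming from strong continuity of the semigroup at $t=0$. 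Concretely, one may either differentiate the explicit Mehler kernel \eqref{heat-kernel-2} under the integral sign and check directly that $p$ satisfies the backward equation in $x$ (a routine but slightly tedious Gaussian calculation), or simply invoke that $P_\gamma(t)$ is generated by $-L_{\R^n}$, so that \eqref{aggiunta} gives $u(x,t)=[P_\gamma(t)u_0](x)$ and hence $\partial_t u=L_{\R^n}u$ on $\R^n\times\{t>0\}$. I would favour the semigroup route, relegating the kernel differentiation to a remark, since it is cleaner and avoids reproving identities already displayed.

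\emph{Part 2 (log-concavity).} This is immediate from the second claim of Proposition \ref{log-concavity proposition}: taking $g=u_0$ (log-concave by hypothesis), the function $x\mapsto \int_{\R^n} u_0(z)\,p_\gamma(x,z;t)\,d\gamma(z)=u(x,t)$ is log-concave for every $t>0$. Thus nothing new is needed here; the content was front-loaded into that proposition, which itself rests on the log-concavity of $(x,z)\mapsto p_\gamma(x,z;t)e^{-|z|^2/2}$ together with Pr\'ekopa's theorem.

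\emph{Main obstacle.} There is no serious analytic difficulty in this proposition --- it is a warm-up for the harder, domain-constrained Theorems \ref{T1} and \ref{Lemma-1}, where the free-space kernel is replaced by the restricted kernel $p_\gamma^\Omega$ obtained only as a Trotter-product limit. The only point requiring a little care is the justification of differentiation under the integral sign (or, equivalently, of the generator identity) for initial data merely in $L^2(\R^n,\gamma)$ rather than smooth: here I would appeal to the smoothing (ultracontractive) property of the Gaussian semigroup, so that $P_\gamma(t)u_0$ is smooth in $x$ and $C^1$ in $t$ for $t>0$ regardless of the roughness of $u_0$, which legitimates the pointwise interpretation of the equation away from $t=0$.
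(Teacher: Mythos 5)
Your proposal is correct and follows essentially the same route as the paper: both rest on identifying $u(x,t)$ with the Mehler-kernel integral $\int_{\R^n}u_0(z)\,p_\gamma(x,z;t)\,d\gamma(z)$ to handle the PDE part, and both obtain log-concavity of $u(\cdot,t)$ by feeding this representation into Proposition \ref{log-concavity proposition} (i.e.\ Pr\'ekopa applied to the log-concave kernel). The only difference is cosmetic: the paper verifies the equation by checking directly that the explicit kernel \eqref{heat-kernel-1} solves it and differentiating under the integral sign, whereas you invoke the generator identity for the semigroup $P_\gamma(t)$ --- a step you yourself note is interchangeable with the direct computation --- and your extra remark on smoothing to justify the pointwise equation for rough $L^2$ data is a point of care the paper passes over silently.
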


\begin{proof}[\bf Proof]
By \eqref{heat-kernel-1}, we can directly check that the kernel $p_\gamma$ solves the parabolic equation in problem \eqref{parabolic-equation1}. As a consequence, the function $u$ defined by
\begin{equation}\label{Pt}
\begin{aligned}
u(x,t)=\int_{\R^n} u_0(z) p_\gamma(x,z;t) d\gamma(z)
\end{aligned}
\end{equation}
is a solution of the PDE in problem \eqref{H-PDE1}. Moreover, by the change of variable $z=e^{-t}x+\sqrt{1-e^{-2t}}y$, we obtain the following expression for $u$:
\begin{equation}\nonumber
\begin{aligned}
u(x,t)=\int_{\R^n} u_0(e^{-t}x+\sqrt{1-e^{-2t}}y) d\gamma(y).
\end{aligned}
\end{equation}
Clearly, letting $t=0$, we have $u(x,t)=u_0$. The previous facts show that $u$ is a solution of the equation in \eqref{H-PDE1}.

Finally, if $u_0$ is log-concave, then the log-concavity of $u(\cdot,t)$, for every $t>0$, follows from \eqref{Pt} and Proposition \ref{log-concavity proposition}.
\end{proof}

Now we are ready to prove Theorem \ref{Lemma-1}, a crucial result for this paper.

\begin{proof}[\bf Proof of Theorem \ref{Lemma-1}] Let $m\in\mathbb{N}$. For a fixed $t>0$, we define, for $x,z\in\Omega$,
\begin{align*}
f_m(x_0,x_1,\cdots,x_m;t)=\prod_{i=1}^{m}p_\gamma (x_{i-1},x_i,t/m),
\end{align*}
where $x_0=x,x_m=z$, $x_i\in \R^n$, $1\leq i\leq m-1$. It is obvious that $f_m(x,z;t)\in C^{\infty}(\R^n \times \R^n \times (0,+\infty))$ for every $m\in \N$.

We also set:
\begin{eqnarray*}
g_m(x,z;t)=\int_{\Omega}\cdots\int_{\Omega} f_m(x,x_1,\cdots,x_{m-1},z;t) d\gamma(x_1)\cdots d\gamma(x_{m-1})
\end{eqnarray*}
(we will observe within few lines that this quantity is finite). The regularity of $f_m$ implies that $g_m\in C^{\infty}(\Omega \times \Omega \times (0,+\infty))$ for every $m\in \N$.

By the semigroup property of the kernel $p_\gamma$, we know that
\begin{equation}\label{Chapman-Kolmogorov-equations}
\int_{\R^n} p_\gamma (x_{k-1},x_k;t/m) p_\gamma(x_k,x_{k+1};t/m)d\gamma(x_k)= p_\gamma(x_{k-1},x_{k+1};2t/m),\quad k=1,\ldots,m-1.
\end{equation}
Therefore $g_m (x,z;t) \leq p_\gamma (x,z;t)<+\infty$ and
\begin{equation}\label{aggiunta10}
\int_\Omega g_m (x,z;t)d\gamma(z) \leq \int_{\R^n} p_\gamma (x,z;t)d\gamma(z)=1,
\end{equation}
for all $t>0$ and $m\in \N$. Moreover, by formula \eqref{Chapman-Kolmogorov-equations}, we can check that $g_{2^m}(x,z;t)$ is a point-wise decreasing sequence of functions, with respect to the index $m$. As a consequence, there exists a function $g=g(x,z;t)\geq 0$, such that
\[
g(x,z;t)=\mathop{\lim}_{m \rightarrow \infty} g_{2^m}(x,z;t),\quad \mathrm{for\ all }\ x,z\in \Omega, \ t>0.
\]
By the Lebesgue dominated convergence theorem, for all $\varphi \in L^2(\Omega,\gamma)$, we have
\[
\int_\Omega g_{2^m} (x,z;t)\varphi (z)d\gamma(z) \xrightarrow{L^2(\Omega,\gamma)}\int_{\Omega} g(x,z;t)\varphi(z)d\gamma(z),\quad \mathrm{as}\ m\rightarrow +\infty.
\]
On the other hand, by the definition of $g_m$, we know that for every $m$ and for every $x\in \Omega$,
\begin{equation}\label{aggiunta100}
\int_\Omega g_{m} (x,z;t)\varphi (z)d\gamma(z)=(P_\gamma (t/m))^m\varphi(x).
\end{equation}
By formula \eqref{aggiunta100}, \eqref{Totter} and Remark \ref{remark}, we know that for all $\varphi \in L^2(\Omega,\gamma)$
\[
\int_\Omega g_{m} (x,z;t)\varphi (z)d\gamma(z) \xrightarrow{L^2(\Omega,\gamma)}\int_{\Omega} p^\Omega_\gamma(x,z;t)\varphi(z)d\gamma(z),\quad \mathrm{as}\ m\rightarrow +\infty.
\]
Thus, we deduce that for all $\varphi \in L^2(\Omega,\gamma)$, $x\in \Omega$,
\[
\int_{\Omega} g(x,z;t)\varphi(z)d\gamma(z)=\int_{\Omega} p^\Omega_\gamma(x,z;t)\varphi(z)d\gamma(z),
\]
which implies that
\[
g(x,z;t)=p^\Omega_\gamma(x,z;t),\quad \mbox{for\ a.e.}\ x,z\in \Omega,\ t>0.
\]
This yields
\[
g_{2^m} (x,z;t)\rightarrow p^\Omega_\gamma (x,z;t),\quad \mathrm{as}\ m\rightarrow +\infty,
\]
for all $x,z\in \Omega$ and $t>0$.

For $t>0$, the function
\begin{eqnarray*}
H_0(x_0,\dots,x_m;t)=
f(x_0,\dots,x_m;t) e^{-\frac{|x_1|^2+|x_2|^2+\dots+|x_m|^2}{2}}
\end{eqnarray*}
is log-concave in $(x_0,\dots,x_m)$, by Proposition \ref{log-concavity proposition}. Hence the function
\begin{equation*}
H_1(x_0,\dots,x_m;t)=H_0(x_0,\dots,x_m;t)\chi_\Omega(x_1)\dots\chi_\Omega(x_{m-1})
\end{equation*}
is log-concave, as product of log-concave functions, because $\Omega$ is convex. By Pr\'{e}kopa's theorem, integration with respect to $x_1,x_2,\cdots,x_{m-1}$ will leave this property unchanged for the result of integration, i.e. for the function
$$
g_m(x,z;t)e^{-|z|^2/2}.
$$
Therefore, $p^\Omega_{\gamma}(x,z;t)e^{-|z|^2/2}$ is log-concave with respect to $(x,z)\in \Omega$, for fixed $t>0$, as a limit of log-concave functions.
\end{proof}

\begin{remark} From the probabilistic point of view, the kernel $p_\gamma^\Omega$ is the transition probability density of Brownian motions, killed when exiting the set $\Omega$. There exist probabilistic methods to obtain point-wise estimates, especially for the heat kernel associated with the Dirichlet Laplacian. This is an idea due to Kac \cite{Kac1966,Kac1950} and has been used extensively.
\end{remark}

\begin{proposition}\label{p3} Let $\Omega \subset \R^n$ be an open bounded set, with Lipschitz boundary. Then, for every $f\in L^2(\Omega,\gamma)$ and for every $t>0$, the following inequality holds
\[
|e^{-tL_\Omega}f| \leq (P_\gamma (t)|_\Omega)|f|.
\]
\end{proposition}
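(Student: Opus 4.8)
The plan is to deduce the inequality from two ingredients: the integral representation of $e^{-tL_\Omega}$ in terms of the Dirichlet kernel $p_\gamma^\Omega$, and the pointwise domination of $p_\gamma^\Omega$ by the global Mehler kernel $p_\gamma$. Once these are in place, the claim follows from the triangle inequality for integrals.

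First I would recall from Subsection~\ref{Se3-1} that, for $f\in L^2(\Omega,\gamma)$ and $t>0$,
\[
e^{-tL_\Omega}f(x)=\int_\Omega p_\gamma^\Omega(x,z;t)\,f(z)\,d\gamma(z),
\]
while, since $\mathbf{I}_0 g=\chi_\Omega g$ and $P_\gamma(t)$ acts through the nonnegative Mehler kernel,
\[
\bigl(P_\gamma(t)|_\Omega\bigr)|f|(x)=\int_\Omega p_\gamma(x,z;t)\,|f(z)|\,d\gamma(z),\qquad x\in\Omega.
\]

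Next, the crucial ingredient is the domination $0\le p_\gamma^\Omega(x,z;t)\le p_\gamma(x,z;t)$ for a.e.\ $x,z\in\Omega$ and every $t>0$, which is already implicit in the proof of Theorem~\ref{Lemma-1}. Indeed, each factor in the definition of $g_m$ is nonnegative, so enlarging every domain of integration from $\Omega$ to $\R^n$ and collapsing the product by the Chapman--Kolmogorov identity \eqref{Chapman-Kolmogorov-equations} gives $g_m(x,z;t)\le p_\gamma(x,z;t)$; since $g_{2^m}\ge 0$ and $g_{2^m}\to p_\gamma^\Omega$ pointwise, passing to the limit preserves both bounds and yields $0\le p_\gamma^\Omega\le p_\gamma$.

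Finally, combining the two displays with the triangle inequality and this domination,
\[
\bigl|e^{-tL_\Omega}f(x)\bigr|\le\int_\Omega p_\gamma^\Omega(x,z;t)\,|f(z)|\,d\gamma(z)\le\int_\Omega p_\gamma(x,z;t)\,|f(z)|\,d\gamma(z)=\bigl(P_\gamma(t)|_\Omega\bigr)|f|(x),
\]
which is the asserted inequality. The argument presents no real obstacle: it rests entirely on the nonnegativity of the Mehler kernel and on the kernel domination $p_\gamma^\Omega\le p_\gamma$. The only point requiring a little care is to ensure that the pointwise bound $g_m\le p_\gamma$ survives the limit $m\to\infty$, which is immediate since it holds for every $m$ and the convergence $g_{2^m}\to p_\gamma^\Omega$ is pointwise.
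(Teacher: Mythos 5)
Your proof is correct and takes essentially the same route as the paper: both rest on the kernel domination $0\le p_\gamma^\Omega\le p_\gamma$, obtained from the bound $g_m\le p_\gamma$ (via the Chapman--Kolmogorov identity \eqref{Chapman-Kolmogorov-equations}) together with the decreasing pointwise convergence $g_{2^m}\to p_\gamma^\Omega$ established in the proof of Theorem \ref{Lemma-1}. The only cosmetic difference is that the paper first reduces to the case $f\ge 0$, whereas you treat general $f$ directly through the pointwise triangle inequality under the integral; the substance is identical.
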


\begin{proof}[\bf Proof.] It is enough to prove the statement for $f\geq 0$. In the notations of the proof of Theorem \ref{Lemma-1}, using \eqref{Totter}, \eqref{aggiunta10} and the fact that $g_{2^m}$ converges decreasing to $p_\gamma^\Omega$, we obtain
\[
\int_\Omega p_\gamma^{\Omega}(x,z;t) f(z)d\gamma(z)\leq \int_\Omega p_\gamma(x,z;t)f(z)d\gamma(z),
\]
which completes the proof.
\end{proof}

A brief review of the definition of the sequences $\{ \lambda_k \}$ and $\{ \phi_k \}$ is necessary here. As previously stated in Section \ref{Se2-1}, operator $L_\Omega$ has a countable set of eigenvalues $\{\lambda_k\}_{k\in\N}$, whose normalized eigenfunctions $\{ \phi_k\}_{k\in\N}$ span $W^{1,2}_0(\Omega,\gamma)$ (in $L^2(\Omega,\gamma)$ as well) and are mutually orthogonal. By "normalized" we mean
$$
\|\phi_k\|_{L^2(\Omega,\gamma)}=1\,.
$$

\begin{proposition}\label{p2}
Let $\Omega$ be an open, bounded set in $\R^n$, with Lipschitz boundary. Then
\[
\|\phi_k\|_{L^{\infty}(\Omega)}\leq c\lambda_k^{n/4},
\]
where $c:=c(\Omega,n)$ is a positive constant. Moreover, for all $t>0$, we have
\[
\mathop{\sum}^{\infty}_{k=1} e^{-t\lambda_k }< +\infty.
\]
\end{proposition}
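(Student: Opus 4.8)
The plan is to deduce both assertions from a single ingredient: the pointwise domination $p^\Omega_\gamma(x,z;t)\le p_\gamma(x,z;t)$ recorded in Proposition \ref{p3}, combined with the explicit formula \eqref{heat-kernel-1} and the boundedness of $\Omega$. Set $R:=\sup_{x\in\Omega}|x|<\infty$. For the $L^\infty$ bound I would start from the fact that each normalized eigenfunction is an eigenfunction of the semigroup, $e^{-tL_\Omega}\phi_k=e^{-t\lambda_k}\phi_k$, so that for every $t>0$ and $x\in\Omega$,
\[
\phi_k(x)=e^{t\lambda_k}\int_\Omega p^\Omega_\gamma(x,z;t)\,\phi_k(z)\,d\gamma(z).
\]
Applying the Cauchy--Schwarz inequality and the normalization $\|\phi_k\|_{L^2(\Omega,\gamma)}=1$ gives
\[
|\phi_k(x)|^2\le e^{2t\lambda_k}\int_\Omega p^\Omega_\gamma(x,z;t)^2\,d\gamma(z).
\]

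The heart of the argument is then an ultracontractive diagonal estimate. I would bound
\[
\int_\Omega p^\Omega_\gamma(x,z;t)^2\,d\gamma(z)\le\Big(\sup_{z\in\R^n}p_\gamma(x,z;t)\Big)\int_\Omega p^\Omega_\gamma(x,z;t)\,d\gamma(z),
\]
using $p^\Omega_\gamma\le p_\gamma$ from Proposition \ref{p3} in the first factor and the mass bound $\int_\Omega p^\Omega_\gamma(x,z;t)\,d\gamma(z)\le1$ (inherited from \eqref{aggiunta10} in the limit $g_{2^m}\to p^\Omega_\gamma$) in the second. A direct optimization of the exponent in \eqref{heat-kernel-1} over $z$ (maximizer $z=e^tx$) gives $\sup_z p_\gamma(x,z;t)=(1-e^{-2t})^{-n/2}e^{|x|^2/2}$, which is finite precisely because $\Omega$ is bounded: it is at most $(1-e^{-2t})^{-n/2}e^{R^2/2}$. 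Since $(1-e^{-2t})^{-n/2}\le c'\,t^{-n/2}$ on $(0,1]$, I obtain $|\phi_k(x)|^2\le C(\Omega,n)\,e^{2t\lambda_k}t^{-n/2}$ for all $x\in\Omega$ and $t\in(0,1]$. Choosing $t=\min\{1,1/\lambda_k\}$ yields $\|\phi_k\|_{L^\infty(\Omega)}\le c\,\lambda_k^{n/4}$, where the finitely many indices with $\lambda_k<1$ are absorbed into the constant using $\lambda_k\ge\lambda_1>0$.

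For the summability of $\sum_k e^{-t\lambda_k}$ I would invoke the Hilbert--Schmidt property recorded above: since $p^\Omega_\gamma\le p_\gamma$ and $\iint p_\gamma^2\,d\gamma\,d\gamma<\infty$, the kernel $p^\Omega_\gamma(\cdot,\cdot;t)$ lies in $L^2(\Omega\times\Omega,\gamma\otimes\gamma)$, so $e^{-tL_\Omega}$ is Hilbert--Schmidt. Applying Parseval in the orthonormal eigenbasis $\{\phi_k\}$, whose eigenvalues are $e^{-t\lambda_k}$, gives
\[
\sum_{k=1}^\infty e^{-2t\lambda_k}=\int_\Omega\int_\Omega p^\Omega_\gamma(x,z;t)^2\,d\gamma(x)\,d\gamma(z)<\infty
\]
for every $t>0$; replacing $t$ by $t/2$ yields $\sum_k e^{-t\lambda_k}<\infty$ for all $t>0$.

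I expect the main obstacle to be the uniform-in-$x$ diagonal bound with the sharp power $t^{-n/2}$: this is exactly the point where boundedness of $\Omega$ is essential (it keeps $\sup_z p_\gamma(x,z;t)$ finite) and where the Davies-type optimization over $t$ converts the $L^2\to L^\infty$ smoothing of the semigroup into the eigenvalue power $\lambda_k^{n/4}$. The remaining steps—the Cauchy--Schwarz reduction, the mass estimate, and the Hilbert--Schmidt identity—are routine once the domination $p^\Omega_\gamma\le p_\gamma$ is in hand.
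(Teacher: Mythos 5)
Your proposal is correct and follows essentially the same route as the paper: dominate the Dirichlet kernel $p^\Omega_\gamma$ by the explicit Mehler kernel $p_\gamma$, combine the uniform bound $p_\gamma\le c_0(1-e^{-2t})^{-n/2}$ on the bounded set $\Omega$ with the mass bound $\int_\Omega p_\gamma\,d\gamma\le 1$ and Cauchy--Schwarz, then optimize in $t$ to get $\|\phi_k\|_{L^\infty}\le c\lambda_k^{n/4}$, and finally control $\sum_k e^{-t\lambda_k}$ by the finiteness of a Gaussian integral of the Mehler kernel. The only (harmless) cosmetic differences are that you take $t=\min\{1,1/\lambda_k\}$ instead of the paper's exact minimization of $e^{t\lambda_k}(1-e^{-2t})^{-n/4}$, and for the summability you use the Hilbert--Schmidt/Parseval identity $\sum_k e^{-2t\lambda_k}=\iint (p^\Omega_\gamma)^2\,d\gamma\,d\gamma$ rather than the paper's pointwise diagonal expansion $p^\Omega_\gamma(x,x;t)=\sum_k e^{-t\lambda_k}\phi_k(x)^2$ integrated over $\Omega$ (in fact your variant sidesteps the pointwise evaluation on the diagonal and is, if anything, slightly cleaner).
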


\begin{proof}[\bf Proof]
 By the spectral mapping theorem \cite[Theorem 2.4]{Pazy1983}, we know that the eigenvalues of the self-adjoint operator $-L_\Omega$ are the eigenvalues of the semigroup $e^{-tL_\Omega}$ (which is also a self-adjoint operator) in the Hilbert space $L^2(\Omega,\gamma)$, and the corresponding eigenfunctions coincide. So, we have
\begin{equation}\label{Section5-1}
e^{-t\lambda_k}\phi_k=e^{-tL_\Omega} \phi_k,\quad\forall\, k\in\N.
\end{equation}
Therefore, by Proposition \ref{p3} and H\"{o}lder inequality, we have
\begin{align*}
|e^{-t\lambda_k}\phi_k|&=|e^{-tL_\Omega} \phi_k|\leq \left|\int_\Omega p_\gamma(x,z;t)\phi_k(z)d\gamma(z)\right|\\
&\leq \left( \int_\Omega [p_\gamma(x,z;t)]^2 d\gamma(z)\right)^{1/2}\left( \int_\Omega \phi_k(z)^2 d\gamma(z)\right)^{1/2}\\
&\leq \left( \int_{\Omega} [p_\gamma(x,z;t)]^2 d\gamma(z)\right)^{1/2},
\end{align*}
where the last inequality follows from $\| \phi_k\|_{L^2(\Omega,\gamma)}=1$.

By the definition of $p$ and $p_\gamma$, we have
\begin{equation}\nonumber
\int_\Omega p_\gamma (x,z;t)d\gamma(z)=\int_\Omega p(x,z;t)dz\leq \int_{\R^n} p(x,z;t)dz =1.
\end{equation}
Since $\Omega$ is bounded, by formula \eqref{heat-kernel-2}, there exists a positive constant $c_0:=c(\Omega,n)$ such that for every $x,z\in \overline{\Omega}$ and $t>0$, we have
\[
p_\gamma(x,z;t)\leq  \frac{c_0}{(1-e^{-2t})^{n/2}} .
\]
From the previous considerations, we deduce that
\begin{equation}\label{21}
|\phi_k|\leq e^{t\lambda_k}\left( \int_\Omega [p_\gamma(x,z;t)]^2 d\gamma(z)\right)^{1/2}\leq c_0\frac{e^{t\lambda_k}}{(1-e^{-2t})^{n/4}}.
\end{equation}
The previous inequality holds for every $t>0$. Define
\[
h(t):=\frac{e^{t\lambda_k}}{(1-e^{-2t})^{n/4}},\quad t>0.
\]
By a direct calculation, its minimum value is attained when $t_0=\ln \sqrt{1+\frac{n}{2\lambda_k}}$. Then, we have
\[
h(t_0)=\frac{e^{t_0\lambda_k}}{(1-e^{-2t_0})^{n/4}}=\left(1+\frac{n}{2\lambda_k} \right)^{\frac{\lambda_k}{2}} \left(1+\frac{2\lambda_k}{n} \right)^{\frac{n}{4}}.
\]
Since $\lambda_k\rightarrow +\infty$ as $k\rightarrow \infty$, we have
\[
\mathop{\lim}_{k\rightarrow \infty}\left(1+\frac{n}{2\lambda_k} \right)^{\frac{\lambda_k}{2}}= e^{\frac{n}{4}}.
\]
Therefore, by formula \eqref{21}, we deduce that
\begin{equation}\nonumber
\| \phi_k\|_{L^{\infty}(\Omega)}\leq c(\Omega,n)\lambda_k^{n/4},
\end{equation}
where the constant $c(\Omega,n)>0$.

Since $p^{\Omega}_\gamma\in L^2(\Omega\times \Omega,\gamma)$, for every $t>0$, then, for a.e. $x\in \Omega$, and for every fixed $t>0$, we have $p^{\Omega}_\gamma (x,z;t)\in L^2(\Omega,\gamma)$. Since $(\phi_k)_{k\in \N}$ is an orthonormal basis for $L^2(\Omega,\gamma)$, we have
\[
 p^{\Omega}_\gamma (x,z;t)=\sum^{\infty}_{k=1} \left(\int_\Omega p^{\Omega}_\gamma (x,z;t)\phi_k(z)d\gamma(z) \right) \phi_k(z).
 \]
On the other hand, by the kernel representation of the semigroup $e^{-tL_\Omega}$,
$$
\int_\Omega p^{\Omega}_\gamma (x,z;t)\phi_k(z)d\gamma(z)=[e^{-tL_\Omega}\phi_k ](x).
$$
Hence, by \eqref{Section5-1} we have
\begin{equation}\label{Add-5-1}
 p^{\Omega}_\gamma (x,z;t)=\sum^{\infty}_{k=1}[e^{-tL_\Omega}\phi_k ](x)\phi_k(z)=\sum^{\infty}_{k=1} e^{-t\lambda_k}\phi_k(x) \phi_k(z)<+\infty.
\end{equation}
Letting $z=x$ in \eqref{Add-5-1}, we have
\[
 p^{\Omega}_\gamma (x,x;t)=\sum^{\infty}_{k=1}[e^{-tL_\Omega}\phi_k ](x)\phi_k(z)=\sum^{\infty}_{k=1} e^{-t\lambda_k}\phi_k(x)^2<+\infty.
\]
Since for every $k$, $\| \phi_k\|_{L^2(\Omega,\gamma)}=1$, for any positive integer $N$ we can deduce that
\begin{equation}\label{Add-5-2}
\sum^{N}_{k=1} e^{-t\lambda_k}\leq \int_{\Omega} p^\Omega_\gamma(x,x;t)d\gamma(x)\leq  \int_{\R^n}  p_\gamma (x,x;t)d\gamma(x)<+\infty.
\end{equation}
Therefore, by the monotone convergence theorem, we have
\[
\mathop{\sum}^{\infty}_{k=1}e^{-t\lambda_k}< \infty,
\]
for all $t>0$. This completes the proof.
\end{proof}

\begin{remark}\label{r1} The classical bootstrap argument of elliptic partial differential equations could be used to get the $L^\infty$-estimate for the eigenfunctions. Furthermore, the Schauder interior estimates permit us to get the estimate of higher derivatives, in particular for the first and second derivatives (cf. \cite{Book-Gilbarg-Trudinger}).
\end{remark}

In the following, we show the connection between the basis $\{\phi_k\}_{k\geq 1}$ of eigenfunctions and the kernel of Ornstein-Uhlenbeck operator, that is, the spectral representation of the kernel, which implies that $p^\Omega_\gamma$ is also a solution of parabolic equation \eqref{parabolic-equation1}.

\begin{proposition}\label{p1}
Let $\Omega$ be an open, bounded set in $\R^n$, with Lipschitz boundary. Then
\begin{equation}\label{aggiunta 1000}
p^\Omega_{\gamma}(x,z;t)=\mathop{\sum}_{k=1}^{\infty} e^{-\lambda_k t}\phi_k(x)\phi_k(z),\quad\forall\,(x,z)\in\Omega\times\Omega,\, \forall\, t>0.
\end{equation}
Moreover, $p^\Omega_\gamma$ is the unique solution to problem \eqref{parabolic-equation1}.
\end{proposition}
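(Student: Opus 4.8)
The plan is to establish the spectral identity \eqref{aggiunta 1000} first, and then to verify that the resulting function solves problem \eqref{parabolic-equation1} and is its unique solution. Much of the identity is in fact already available: formula \eqref{Add-5-1}, obtained in the course of proving Proposition \ref{p2}, gives $p^\Omega_\gamma(x,z;t)=\sum_{k\ge1}e^{-\lambda_k t}\phi_k(x)\phi_k(z)$ for a.e. $(x,z)$. The first task is therefore to upgrade this to a genuinely pointwise identity on $\Omega\times\Omega$. To this end I would show that, for each fixed $t>0$, the series converges absolutely and uniformly. Combining the two estimates of Proposition \ref{p2}, namely $\|\phi_k\|_{L^\infty(\Omega)}\le c\lambda_k^{n/4}$ and $\sum_k e^{-\lambda_k t}<+\infty$, the general term is bounded by
$$
\bigl|e^{-\lambda_k t}\phi_k(x)\phi_k(z)\bigr|\le c^2\,\lambda_k^{n/2}\,e^{-\lambda_k t}.
$$
Writing $e^{-\lambda_k t}=e^{-\lambda_k t/2}\,e^{-\lambda_k t/2}$ and using that $\lambda_k^{n/2}e^{-\lambda_k t/2}$ is bounded in $k$ (a polynomial against a decaying exponential, since $\lambda_k\to\infty$), the Weierstrass $M$-test yields uniform convergence on $\overline\Omega\times\overline\Omega$ to a continuous function. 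Two continuous representatives that agree almost everywhere coincide everywhere, so \eqref{aggiunta 1000} holds pointwise and we may identify $p^\Omega_\gamma$ with this continuous representative.

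Next I would check that the series solves the parabolic equation. Each summand is, in the variable $x$, an exact solution: from \eqref{Lambda-Gauss-intro} we have $L_\Omega\phi_k=-\lambda_k\phi_k$, whence
$$
L_{\Omega,x}\bigl[e^{-\lambda_k t}\phi_k(x)\phi_k(z)\bigr]=-\lambda_k e^{-\lambda_k t}\phi_k(x)\phi_k(z)=\partial_t\bigl[e^{-\lambda_k t}\phi_k(x)\phi_k(z)\bigr].
$$
To transfer this to the sum I must justify term-by-term differentiation, i.e. prove that the series of first and second $x$-derivatives, and of the $t$-derivative, converge locally uniformly on $\Omega\times\Omega\times(0,+\infty)$. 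For this I would invoke the interior Schauder estimates mentioned in Remark \ref{r1}, which bound $\|\phi_k\|_{C^2(K)}$ on compact $K\subset\Omega$ by a power of $\lambda_k$; the decisive point is that for $t$ in a compact subset of $(0,+\infty)$ the factor $e^{-\lambda_k t}$ absorbs any such polynomial growth, exactly as above, so each differentiated series again passes the $M$-test. The boundary condition $p^\Omega_\gamma=0$ on $\partial\Omega$ then follows from $\phi_k|_{\partial\Omega}=0$ and uniform convergence up to the boundary, while the initial condition is precisely the strong continuity $e^{-tL_\Omega}\varphi\to\varphi$ as $t\to0^+$ of the $\mathcal{C}_0$-semigroup, which is the meaning of $p^\Omega_\gamma(x,z;0)\,d\gamma(z)=\delta(x-z)$.

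Finally, for uniqueness I would use an energy estimate: if $p_1,p_2$ solve \eqref{parabolic-equation1} with the same data, then $w=p_1-p_2$ solves the homogeneous problem with vanishing boundary and initial values, and testing the equation against $w$ and integrating over $\Omega$ with respect to $d\gamma$ gives, via the coercivity identity $(L_\Omega w,w)=-\int_\Omega|\nabla_x w|^2\,d\gamma$,
$$
\tfrac{d}{dt}\|w(\cdot,t)\|^2_{L^2(\Omega,\gamma)}=-2\int_\Omega|\nabla_x w|^2\,d\gamma\le 0,\qquad \|w(\cdot,0)\|_{L^2(\Omega,\gamma)}=0,
$$
forcing $w\equiv0$; equivalently, uniqueness is immediate from the fact that $-L_\Omega$ generates a unique semigroup by Hille--Yosida, so its integral kernel is unique. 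The main obstacle is the second step, namely the term-by-term differentiation of the eigenfunction expansion: establishing the locally uniform convergence of the differentiated series is where the interplay between the polynomial derivative bounds on $\phi_k$ and the exponential weight $e^{-\lambda_k t}$ must be made precise.
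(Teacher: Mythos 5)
Your proposal is correct and, in its main body, follows the same route as the paper: you start from the a.e.\ identity \eqref{Add-5-1} obtained in the proof of Proposition \ref{p2}, upgrade it to a pointwise identity by combining the bounds $\|\phi_k\|_{L^\infty(\Omega)}\le c\lambda_k^{n/4}$ and $\sum_k e^{-\lambda_k t}<+\infty$ into a Weierstrass $M$-test (this is precisely what the paper means when it asserts uniform convergence on $\Omega\times\Omega\times[a,+\infty)$), and you justify term-by-term differentiation via interior Schauder estimates, which the paper does by citing Gilbarg--Trudinger, Theorem 6.2, as anticipated in Remark \ref{r1}; the boundary condition is handled identically. The genuine differences are in the last two steps. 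For the initial condition, the paper verifies $p^\Omega_\gamma(x,z;0)\,d\gamma(z)=\delta(x-z)$ by a direct orthogonality computation, expanding an arbitrary $\psi\in L^2(\Omega,\gamma)$ in the basis $\{\phi_k\}$ and exchanging sum and integral; your appeal to the strong continuity of the $\mathcal{C}_0$-semigroup is an equivalent and arguably cleaner justification, since the series need not converge pointwise at $t=0$ and the paper's computation is formal there. For uniqueness, the paper invokes the maximum principle for parabolic equations, while you give an energy argument based on $(L_\Omega w,w)=-\int_\Omega|\nabla w|^2\,d\gamma$; both are standard, and yours has the small advantage of working directly in the weighted space where $L_\Omega$ is self-adjoint. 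The only loose point in your writeup is the parenthetical claim that uniqueness is \emph{immediate} from Hille--Yosida: uniqueness of the semigroup generated by $-L_\Omega$ does not by itself exclude solutions of \eqref{parabolic-equation1} that do not arise from the semigroup, so the energy estimate (or the paper's maximum principle) is the argument to keep.
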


\begin{proof}[\bf Proof]
By the proof of Proposition \ref{p2}, we know that
\[
p_\gamma^\Omega(x,z;t)=\mathop{\sum}_{k=1}^{\infty} e^{-\lambda_k t}\phi_k(x)\phi_k(z)
\]
for a.e. $x,y\in\Omega$ and for every $t>0$. On the other hand, again by Proposition \ref{p2}, the series
$$
\mathop{\sum}_{k=1}^{\infty} e^{-\lambda_k t}\phi_k(x)\phi_k(z)
$$
is uniformly convergent on $\Omega \times \Omega \times [a,+\infty)$ for any $a>0$. Hence \eqref{aggiunta 1000} holds. Using Proposition \ref{p2} and \cite[Theorem 6.2]{Book-Gilbarg-Trudinger}, it can be proved that the series generated by the first and second partial derivatives are still uniformly convergent. Hence
\begin{equation}\label{4-4}
-\Delta_x p_\gamma^\Omega+(x,\nabla p^\Omega_\gamma)=\partial_t p^\Omega_\gamma,\quad \mathrm{for\ all}\ x,z\in \Omega,\ t>0.
\end{equation}

Now let $\psi \in L^2(\Omega,\gamma)$; then
\[
\psi(z)=\mathop{\sum}^{\infty}_{j=1} (\psi, \phi_j)\phi_j(z),
\]
where, for brevity, we set
$$
(\psi, \phi_j)=\int_\Omega \psi\phi_j d\gamma.
$$
Since the sequence $\phi_k$ are mutually orthogonal, we have
 \begin{align*}
\int_\Omega p_\gamma^\Omega(x,z;0) \psi(z)d\gamma(z)&=\int_\Omega \sum_{k=1}^{\infty} \sum_{j=1}^{\infty}  (\psi, \phi_j) \phi_k(x) \phi_k(z)\phi_j(z)d\gamma(z)\\
&=\sum_{k=1}^{\infty}  (\psi, \phi_k)\phi_k(x)=\psi(x),
\end{align*}
which proves that $p_\gamma^\Omega(x,z,0)d\gamma(z)=\delta(x-z)$.
Finally, since $\phi_k\in W^{1,2}_0(\Omega,\gamma)$, then for every $t>0$, $p^\Omega_\gamma(x,z;t)=0$ if $x\in \partial \Omega$ or $z\in \partial \Omega$.

We have then proved that $p^\Omega_\gamma$ is a classical solution of problem \eqref{parabolic-equation1}. We also note that such a solution is unique by the maximum principle for parabolic equations.
\end{proof}

\begin{proof}[\bf Proof of Theorem \ref{T1}.]
by Proposition \ref{p1}, we can deduce that $u(x,t)$, which is defined in \eqref{1}, is a solution of \eqref{H-PDE2}. The conclusion follows from the second claim of Theorem \ref{Lemma-1}.
\end{proof}

\section{Application to the first eigenfunction of Ornstein-Uhlenbeck operator}\label{Se5}

In this section, we give new proofs to Theorems \ref{Application1} and \ref{Application2}.

Let $\Omega$ be a bounded open subset of $\R^n$, with Lipschitz boundary, and $u$ be a solution of \eqref{Lambda-Gauss-intro}. This is equivalent to say that $\lambda_\gamma=\lambda_1$ is the first Dirichlet eigenvalue of the operator $-L$ in $\Omega$, and $u=\phi_1$ is the corresponding first eigenfunction.

\begin{proof}[\bf Proof of Theorem \ref{Application1}]
We will prove that
$$
\lim_{t\to+\infty}p_\gamma^{\Omega}(x,z;t)e^{\lambda_1 t}e^{-|z|^2/2}=u(x)u(z)e^{-|z|^2/2}
$$
for every $x,z\in\Omega$. By Theorem \ref{Lemma-1}, this implies that $u$ is log-concave. The previous relation is equivalent to prove that
$$
\lim_{t\to+\infty}p_\gamma^{\Omega}(x,z;t)e^{\lambda_1 t}=u(x)u(z)
$$
for every $x,z\in\Omega$. By \eqref{aggiunta 1000},
\[
p^{\Omega}_\gamma\cdot e^{\lambda_1 t}-u(x)u(z)=\mathop{\sum}_{k=2}^{\infty} e^{(-\lambda_k+\lambda_1) t} \phi_{k}(x)\phi_{k}(z), \quad\forall\,(x,z)\in\Omega\times\Omega,\, \forall\, t>0.
\]
By the Cauchy-Schwarz inequality,
\begin{align*}
|p^\Omega_{\gamma}(x,z;t)&\cdot e^{\lambda_1t}-u(x)u(z)|\\
&\leq  \left( \mathop{\sum}_{k=2}^{\infty}  e^{(-\lambda_k +\lambda_1) t}\phi^2_{k}(x) \right)^{1/2} \left( \mathop{\sum}_{k=2}^{\infty}  e^{(-\lambda_k +\lambda_1) t}\phi^2_{k}(z) \right)^{1/2} .
\end{align*}
In order to get bounds on each of these square roots, we use again the spectral representation with $z=x$ and $t=1$, and Proposition \ref{p2}:
\begin{equation}\label{Kac-(9.9)}
\mathop{\sum}_{k=1}^{\infty}  e^{-\lambda_k }\phi^2_{k}(x)=p^\Omega_{\gamma}(x,x;1)\leq c_0 (1-e^{-2})^{-n/2},
\end{equation}
where the positive constant $c_0$ depends on $\Omega$ and $n$. For $t>0$, we define $M(t):=\sup_{k\geq 2}e^{-\lambda_kt+\lambda_1t+\lambda_k}$, we have
\begin{align*}
|p_{\gamma}^{\Omega}(x,z;t)&\cdot e^{\lambda_1t}-u(x)u(z)|\\
&\leq  M(t)\left( \mathop{\sum}_{k=2}^{\infty}  e^{-\lambda_k }\phi^2_{k}(x) \right)^{1/2} \left( \mathop{\sum}_{k=2}^{\infty} e^{-\lambda_k  }\phi^2_{k}(z) \right)^{1/2} \\
&\leq M(t)c_0 (1-e^{-2})^{-n/2}.
\end{align*}
On the other hand, as the sequence $\lambda_k$ is increasing, for $t\ge1$, we have
$$
M(t)\le e^{-\lambda_2t+\lambda_1t+\lambda_2}.
$$
Hence
$$
\lim_{t\rightarrow \infty} M(t)=0.
$$
The proof is completed.
\end{proof}

\begin{proof}[\bf Proof of Theorem \ref{Application2}]
Let $\Omega$ be an open and bounded subset of $\R^n$; we define the trace function $Z_\Omega$ of $\Omega$ as follows
\[
Z_\Omega(t):=\int_\Omega p_\gamma^{\Omega}(x,x;t)d\gamma(x), \quad t>0.
\]
By Propositions \ref{p2} and \ref{p1}, we know that $Z_\Omega(t)<\infty$ for all $t>0$. Firstly, we observe that
\[
\int_{\Omega} p^\Omega_{\gamma}(x,x;t)d\gamma(x)=\mathop{\lim}_{m\rightarrow \infty}\int_\Omega\cdots \int_\Omega \mathop{\prod}_{k=1}^{m} p_\gamma \left(x_{k-1},x_k;\frac{t}{m} \right)d\gamma(x_1)\cdots d\gamma(x_{m-1}) d\gamma(x_{m}),
\]
where $x_0=x$, $x_m=x$. Define
\[
G_{m,\Omega}(t)=\int_\Omega\cdots \int_\Omega \mathop{\prod}_{k=1}^{m} p_\gamma \left(x_{k-1},x_k;\frac{t}{m} \right)d\gamma(x_1)\cdots d\gamma(x_{m-1}) d\gamma(x_{m}).
\]
Moreover, Theorem \ref{Lemma-1} tells us that
$$
(x,z)\,\rightarrow\, p_\gamma^\Omega(x,z;t)e^{-|z^2|/2},\quad x,z\in\Omega,
$$
is log-concave. So, using \cite[Corollary 3.4]{BL76}, we get that for every positive integer $m$, we have
\[
G_{m,\Omega_s}(t)\geq G_{m,\Omega_0}(t)^{1-s} G_{m,\Omega_1}(t)^{s},\quad\forall\, s\in [0,1].
\]
Then, letting $m\rightarrow \infty$, we have
\begin{equation}\label{aggiunta 10000}
Z_{\Omega_s}(t)\geq Z_{\Omega_0}(t)^{1-s} Z_{\Omega_1}(t)^{s},\quad\forall\, s\in [0,1].
\end{equation}
Furthermore, as shown in proof of Theorem \ref{Application1}, we have
\[
p_\gamma^{\Omega}(x,x;t)e^{\lambda_1 t}\rightarrow u(x)^2 ,\quad \mathrm{as}\ t\rightarrow +\infty.
\]
where $x\in \overline{\Omega}$. Since $\| u\|_{L^2(\Omega,\gamma)}=1$, we have
\[
 e^{\lambda_1 t}\int_\Omega p_\gamma^{\Omega}(x,x;t) d\gamma(x) \rightarrow 1,\quad \mathrm{as}\ t\rightarrow +\infty.
\]
Therefore, we deduce that
\[
\lambda_1(\Omega_0)=-\mathop{\lim}_{t\rightarrow \infty} t^{-1} \log Z_{\Omega_0}(t),
\]
and similar relations hold for $\Omega_1$ and $\Omega_s$. By \eqref{aggiunta 10000}, the proof is complete.
\end{proof}

\section{Appendix}

In appendix, we give more details about semigroup theory of Ornstein-Uhlebeck in the class of bounded Lipschitz domains. When we prove the spectral representation of the integral kernel, we assume a priori that the semigroup $e^{-tL_\Omega}$ generated by $L_\Omega$ has a kernel representation. As previously mentioned in this paper, the reference \cite[pages 424-428]{Wood} has already proved that the Ornstein-Uhlenbeck operator generates an analytic $\mathcal{C}_0$-semigroup in the class of bounded Lipschitz domain. In the class of convex bodies, those properties of the semigroup were proved by Cappa \cite{Cappa2019}, and more details were presented in his PhD thesis \cite{Cappa2019-2}. We note that Cappa's proof does not require convexity. The main proof follows the book by Davies \cite{DaviesBook} and the handbook by W. Arendt \cite[Chapter 7]{Arendt-handbook}, so, we can deduce that the semigroup has a representation as an integral kernel (see for instance \cite[Page 422, Theorem 2.18]{Wood}). Precisely we can retrieve the properties listed below.

\begin{proposition}
Let $\Omega\subset\mathbb{R}^n$ be a bounded Lipschitz domain and let $-L_\Omega$ denote the Dirichlet Ornstein-Uhlenbeck operator, with associated $\mathcal{C}_0$-semigroup $T(t)=e^{-tL_\Omega}$ in $L^2(\Omega,\gamma)$, $t>0$. Then:
\begin{enumerate}
    \item For every $t>0$, $T(t):L^2(\Omega,\gamma)\to L^\infty(\Omega)$ is bounded and
    \[
    \|T(t)\|_{L^2\to L^\infty} \le C\,{(1-e^{-2t})}^{-n/4},
    \]
    where $C>0$ depends only on $\Omega$ and $n$.
    \item For each $t>0$ there exists a measurable kernel
    \[
    p^{\Omega}_\gamma:\Omega\times\Omega\to [0,\infty)
    \]
    such that for all $f\in L^2(\Omega,\gamma)$ and a.e.\ $x\in\Omega$,
    \[
    (T(t)f)(x) = \int_\Omega p^{\Omega}_\gamma(x,y;t)\, f(y)\,d\gamma(y),
    \]
    with
    \[
    \operatorname*{ess\,sup}_{x\in\Omega} \|p^{\Omega}_\gamma(x,\cdot,t)\|_{L^2(\Omega,\gamma)} \le \|T(t)\|_{L^2\to L^\infty} \le C\, {(1-e^{-2t})}^{-n/4}.
    \]
    \item The kernel has the following properties.
    \begin{itemize}
        \item Symmetry: $p^\Omega_\gamma(x,y;t)=p^\Omega_\gamma(y,x)$ for a.e.\ $x,y\in\Omega$.
        \item Positivity: $p^\Omega_\gamma(x,y;t)\ge 0$ for a.e.\ $x,y\in\Omega$.
        \item Semigroup property, i.e.
        \[
        p^\Omega_\gamma(x,y;t+s) = \int_\Omega p^\Omega_\gamma(x,z;t)\,p^\Omega_\gamma(z,y;t)\,d\gamma(z), \quad \text{for all } s,t>0,
        \]
        for a.e.\ $x,y\in\Omega$.
    \end{itemize}
\end{enumerate}
\end{proposition}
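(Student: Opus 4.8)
The plan is to obtain the quantitative ultracontractivity estimate in item~(1) directly from the pointwise Mehler-kernel bound already established in the proof of Proposition~\ref{p2}, and then to feed this into the abstract theory of ultracontractive, self-adjoint, positivity-preserving semigroups (as developed in \cite{DaviesBook} and \cite[Chapter~7]{Arendt-handbook}) to extract the integral kernel and its structural properties. For item~(1), I would start from the domination inequality of Proposition~\ref{p3}, namely $|e^{-tL_\Omega}f|\le (P_\gamma(t)|_\Omega)|f|$, and estimate, for $f\in L^2(\Omega,\gamma)$ and a.e.\ $x\in\Omega$,
\begin{align*}
|(e^{-tL_\Omega}f)(x)|
&\le \int_\Omega p_\gamma(x,z;t)\,|f(z)|\,d\gamma(z)\\
&\le \left(\int_\Omega p_\gamma(x,z;t)^2\,d\gamma(z)\right)^{1/2}\|f\|_{L^2(\Omega,\gamma)},
\end{align*}
by the Cauchy--Schwarz inequality. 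Combining the bound $p_\gamma(x,z;t)\le c_0(1-e^{-2t})^{-n/2}$ on $\overline\Omega\times\overline\Omega$ with $\int_\Omega p_\gamma(x,z;t)\,d\gamma(z)\le 1$ (both from the proof of Proposition~\ref{p2}), the integral factor is at most $c_0^{1/2}(1-e^{-2t})^{-n/4}$, which yields the asserted bound on $\|T(t)\|_{L^2\to L^\infty}$ with $C=c_0^{1/2}$.

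For item~(2), the existence of the kernel is a consequence of the boundedness $T(t)\colon L^2(\Omega,\gamma)\to L^\infty(\Omega)$. For a.e.\ fixed $x$ the map $f\mapsto (T(t)f)(x)$ is a bounded linear functional on $L^2(\Omega,\gamma)$ of norm at most $\|T(t)\|_{L^2\to L^\infty}$, and the Riesz representation theorem then produces $p^\Omega_\gamma(x,\cdot\,;t)\in L^2(\Omega,\gamma)$ with $\|p^\Omega_\gamma(x,\cdot\,;t)\|_{L^2(\Omega,\gamma)}\le \|T(t)\|_{L^2\to L^\infty}$, which gives the stated ess sup bound. The joint measurability of $(x,z)\mapsto p^\Omega_\gamma(x,z;t)$, together with independence of the chosen representatives, is then supplied by the structure theory of integral operators between $L^p$-spaces, exactly as in \cite[Chapter~2]{DaviesBook}.

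The properties in item~(3) follow formally. Symmetry is inherited from the self-adjointness of $T(t)$ on $L^2(\Omega,\gamma)$; positivity follows from the fact that $T(t)$ is positivity-preserving, which one reads off either from the nonnegative Trotter approximants $g_{2^m}\ge 0$ in the proof of Theorem~\ref{Lemma-1} or from domination by the positivity-preserving $P_\gamma(t)$; and the Chapman--Kolmogorov identity is the kernel form of the semigroup law $T(t+s)=T(t)T(s)$, valid almost everywhere by uniqueness of the kernel.

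I expect the main obstacle to be item~(2): converting the abstract operator bound $T(t)\colon L^2\to L^\infty$ into a genuine jointly measurable pointwise kernel with the stated norm control, and arranging that all the algebraic identities in~(3) hold simultaneously for a single good choice of representative. This measure-theoretic bookkeeping is standard but delicate, and is precisely the content imported from \cite{DaviesBook,Arendt-handbook,Wood}; the analytic input specific to the present operator, namely the explicit decay rate $(1-e^{-2t})^{-n/4}$, is furnished entirely by the Mehler-kernel estimate above.
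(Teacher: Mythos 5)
Your overall strategy --- quantitative ultracontractivity, then the Dunford--Pettis/Davies kernel theorem, then reading the structural properties off the operator identities --- is the right one; it is essentially the argument of \cite[Chapter 2]{DaviesBook} and \cite{Wood} that the paper itself invokes, since the paper gives no self-contained proof of this proposition but explicitly ``retrieves'' it from those references. However, as written your proof is circular within the logical architecture of the paper. You launch item (1) from the domination inequality of Proposition \ref{p3}, but the paper's proof of Proposition \ref{p3} uses the identification of the decreasing Trotter limits $g_{2^m}$ with $p^\Omega_\gamma$, which is carried out in the proof of Theorem \ref{Lemma-1} and which presupposes that $e^{-tL_\Omega}$ already admits an $L^2$ integral kernel --- precisely item (2) of the proposition you are proving; this proposition is the foundation on which Section \ref{Se3} and everything after it rests, so nothing downstream of it may be quoted. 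The same objection applies to your first suggested route to positivity (``the nonnegative Trotter approximants $g_{2^m}\ge 0$'' are only known to converge to $p^\Omega_\gamma$ via that same identification), and your alternative route is genuinely insufficient: domination $|T(t)f|\le P_\gamma(t)|f|$ alone does not force $T(t)$ to be positivity preserving (the operator $-P_\gamma(t)$ is dominated by $P_\gamma(t)$ but is not positive).

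Both defects are repairable without changing your plan. Derive the domination bound directly from the Trotter formula \eqref{Totter}: each approximant $(P_\gamma(t/m)|_\Omega)^m$ is an integral operator with explicit nonnegative kernel $g_m$, and $g_m\le p_\gamma$ by the Chapman--Kolmogorov identity \eqref{Chapman-Kolmogorov-equations} for the Mehler kernel, so
\[
\bigl|(P_\gamma(t/m)|_\Omega)^m f\bigr|(x)\ \le\ \int_\Omega p_\gamma(x,z;t)\,|f(z)|\,d\gamma(z);
\]
passing to the $L^2(\Omega,\gamma)$ limit along an a.e.\ convergent subsequence gives $|e^{-tL_\Omega}f|\le \int_\Omega p_\gamma(x,z;t)|f(z)|\,d\gamma(z)$ with no mention of $p^\Omega_\gamma$. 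Your Cauchy--Schwarz computation with the two bounds $p_\gamma\le c_0(1-e^{-2t})^{-n/2}$ and $\int_\Omega p_\gamma\,d\gamma\le 1$ (both established inside the proof of Proposition \ref{p2} independently of the kernel) then yields item (1) exactly as you wrote it. Likewise, positivity preservation of $T(t)$ follows at the operator level, because each $(P_\gamma(t/m)|_\Omega)^m$ maps nonnegative functions to nonnegative functions and a.e.\ nonnegativity survives $L^2$ limits; once the kernel has been produced in item (2), testing against nonnegative $f$ and invoking a.e.\ uniqueness of the kernel gives $p^\Omega_\gamma\ge 0$. With these substitutions your argument is correct and, unlike the paper's citation-only treatment, actually self-contained up to the measurable-kernel theorem of \cite[Chapter 2]{DaviesBook}.
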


Here, the kernel only belongs to $L^{\infty}(\Omega\times \Omega,\gamma)$. Taking into account the spectral estimates in Section \ref{Se4}, we can get its smoothness, i.e., $p^\Omega_\gamma\in C^{\infty}(\Omega \times \Omega \times (0,+\infty)) \cap C^{0}(\overline{\Omega}\times \overline{\Omega}\times (0,\infty))$.

\bibliographystyle{amsplain}

\end{document}